\providecommand{\U}[1]{\protect \rule{.1in}{.1in}}
\newtheorem{theorem}{Theorem}
\theoremstyle{plain}
\newtheorem{corollary}{Corollary}
\newtheorem{definition}{Definition}
\newtheorem{lemma}{Lemma}
\newtheorem{remark}{Remark}
\numberwithin{equation}{section}
\begin{document}
\title[Adams-Spanne type estimates for the commutators]{Adams-Spanne type estimates for the commutators of fractional type sublinear
operators in generalized Morrey spaces on Heisenberg groups }
\author{F.GURBUZ}
\address{ANKARA UNIVERSITY, FACULTY OF SCIENCE, DEPARTMENT OF MATHEMATICS, TANDO\u{G}AN
06100, ANKARA, TURKEY }
\curraddr{}
\email{feritgurbuz84@hotmail.com}
\urladdr{}
\thanks{}
\thanks{}
\thanks{}
\date{}
\subjclass[2010]{ 42B25, 42B35, 43A15, 43A80}
\keywords{Heisenberg group{; sublinear operator; fractional integral operator;
fractional maximal operator; commutator; }$BMO$ space; {generalized Morrey
space}}
\dedicatory{ }
\begin{abstract}
In this paper we give $BMO$ (bounded mean oscillation) {space} estimates for
commutators of fractional type sublinear operators in generalized Morrey
spaces on Heisenberg groups. The boundedness conditions are also formulated in
terms of Zygmund type integral inequalities.

\end{abstract}
\maketitle

\section{Introduction and main results}

Heisenberg groups play an important role in several branches of mathematics,
such as quantum physics, Fourier analysis, several complex variables, geometry
and topology; see \cite{Stein93} for more details. It is a remarkable fact
that the Heisenberg group, denoted by $H_{n}$, arises in two aspects. On the
one hand, it can be realized as the boundary of the unit ball in several
complex variables. On the other hand, an important aspect of the study of the
Heisenberg group is the background of physics, namely, the mathematical ideas
connected with the fundamental notions of quantum mechanics. In other words,
there is its genesis in the context of quantum mechanics which emphasizes its
symplectic role in the theory of theta functions and related parts of
analysis. Analysis on the groups is also motivated by their role as the
simplest and the most important model in the general theory of vector fields
satisfying H\"{o}rmander's condition. Due to this reason, many interesting
works have been devoted to the theory of harmonic analysis on $H_{n}$ in
\cite{Muller1, Folland-Stein, Garofalo, Muller2, Muller3, Stein93, Xiao,
Yener}.

We start with some basic knowledge about Heisenberg group in generalized
Morrey spaces and refer the reader to \cite{Folland-Stein, Guliyev, Garofalo,
Stein93} and the references therein for more details. The Heisenberg group
$H_{n}$ is a non-commutative nilpotent Lie group, with the underlying manifold
$%
\mathbb{R}
^{2n}\times%
\mathbb{R}
$ and the group structure is given by
\[
\left(  x,t\right)  \circ \left(  x^{\prime},t^{\prime}\right)  =\left(
x+x^{\prime},t+t^{\prime}+2\sum_{j=1}^{n}\left(  x_{2j}x_{2j-1}^{\prime
}-x_{2j-1}x_{2j}^{\prime}\right)  \right)  .
\]
Using the coordinates $g=\left(  x,t\right)  $ for points in $H_{n}$, the
left-invariant vector fields for this group structure are%
\[
X_{2j-1}=\frac{\partial}{\partial x_{2j-1}}+2x_{2j}\frac{\partial}{\partial
t},\qquad \text{\ }j=1,\ldots,n,
\]%
\[
X_{2j}=\frac{\partial}{\partial x_{2j}}+2x_{2j-1}\frac{\partial}{\partial
t},\qquad \text{\ }j=1,\ldots,n.
\]
These vector fields generate the Lie algebra of $H_{n}$ and the commutators of
the vector fields $\left(  X_{1},\ldots,X_{2n}\right)  $ satisfy the relation%
\[
\left[  X_{j},X_{n+j}\right]  =-4X_{2n+1},\qquad j=1,\ldots,n,
\]
with all other brackets being equal to zero.

The inverse element of $g=\left(  x,t\right)  $ is $g^{-1}=\left(
-x,-t\right)  $ and we denote the identity (neutral) element of $H_{n}$ as
$e=\left(  0,0\right)  \in%
\mathbb{R}
^{2n+1}$. The Heisenberg group is a connected, simply connected nilpotent Lie
group. One-parameter Heisenberg dilations $\delta_{r}:$ $H_{n}\rightarrow$
$H_{n}$ are given by $\delta_{r}\left(  x,t\right)  =\left(  rx,r^{2}t\right)
$ for each real number $r>0$. The Haar measure on $H_{n}$ also coincides with
the usual Lebesgue measure on $%
\mathbb{R}
^{2n+1}$. These dilations are group automorphisms and Jacobian determinant of
$\delta_{r}$ with respect to the Lebesgue measure is equal to $r^{Q}$, where
$Q=2n+2$ is the homogeneous dimension of $H_{n}$. We denote the measure of any
measurable set $\Omega \subset$ $H_{n}$ by $\left \vert \Omega \right \vert $.
Then%
\[
\left \vert \delta_{r}\left(  \Omega \right)  \right \vert =r^{Q}\left \vert
\Omega \right \vert ,\qquad d\left(  \delta_{r}x\right)  =r^{Q}dx.
\]
The homogeneous norm on $H_{n}$ is defined as follows%
\[
\left \Vert x\right \Vert _{H_{n}}=\left \Vert \left(  x_{1},\ldots
,x_{2n},x_{2n+1}\right)  \right \Vert _{H_{n}}=\left[  \left(  \sum_{j=1}%
^{2n}x_{j}^{2}\right)  ^{2}+x_{2n+1}^{2}\right]  ^{1/4},
\]
and the Heisenberg distance is given by%
\[
d\left(  g,h\right)  =d\left(  g^{-1}h,0\right)  =\left \vert g^{-1}%
h\right \vert .
\]
This distance $d$ is left-invariant in the sense that $d\left(  g,h\right)
=\left \vert g^{-1}h\right \vert $ remains unchanged when $g$ and $h$ are both
left-translated by some fixed vector on $H_{n}$. Moreover, $d$ satisfies the
triangular inequality (see \cite{Koranyi}, page 320)%
\[
d\left(  g,h\right)  \leq d\left(  g,x\right)  +d\left(  x,h\right)  ,\qquad
g,x,h\in H_{n}.
\]
Using this norm, we define the Heisenberg ball%
\[
B\left(  g,r\right)  =\left \{  h\in H_{n}:\left \vert g^{-1}h\right \vert
<r\right \}
\]
with center $g=\left(  x,t\right)  $ and radius $r$ and denote by
$B^{C}\left(  g,r\right)  =H_{n}\setminus B\left(  g,r\right)  $ its
complement, and we denote by $B_{r}=B\left(  e,r\right)  =\left \{  h\in
H_{n}:\left \vert h\right \vert <r\right \}  $ the open ball centered at $e$, the
identity (neutral) element of $H_{n}$, with radius $r$. The volume of the ball
$B\left(  g,r\right)  $ is $c_{Q}r^{Q}$, where $c_{n}$ is the volume of the
unit ball $B_{1}$:%
\[
c_{Q}=\left \vert B\left(  e,1\right)  \right \vert =\frac{2\pi^{n+\frac{1}{2}%
}\Gamma \left(  \frac{n}{2}\right)  }{\left(  n+1\right)  \Gamma \left(
n\right)  \Gamma \left(  \frac{n+1}{2}\right)  }.
\]
For more details about Heisenberg group, one can refer to \cite{Folland-Stein}.

In the study of local properties of solutions to of second order elliptic
partial differential equations(PDEs), together with weighted Lebesgue spaces,
Morrey spaces $L_{p,\lambda}\left(  H_{n}\right)  $ play an important role,
see \cite{Giaquinta, Kufner}. They were introduced by Morrey in 1938
\cite{Morrey}. For the properties and applications of classical Morrey spaces,
see \cite{ChFraL1, ChFraL2, Gurbuz} and the references therein. We recall its
definition on a Heisenberg group as%

\[
L_{p,\lambda}\left(  H_{n}\right)  =\left \{  f:\left \Vert f\right \Vert
_{L_{p,\lambda}\left(  H_{n}\right)  }=\sup \limits_{g\in H_{n},r>0}%
\,r^{-\frac{\lambda}{p}}\, \Vert f\Vert_{L_{p}(B(g,r))}<\infty \right \}  ,
\]
where $f\in L_{p}^{loc}(H_{n})$, $0\leq \lambda \leq Q$ and $1\leq p<\infty$.

Note that $L_{p,0}=L_{p}(H_{n})$ and $L_{p,Q}=L_{\infty}(H_{n})$. If
$\lambda<0$ or $\lambda>Q$, then $L_{p,\lambda}={\Theta}$, where $\Theta$ is
the set of all functions equivalent to $0$ on $H_{n}$ . It is known that
$L_{p,\lambda}(H_{n})$ is an expansion of $L_{p}(H_{n})$ in the sense that
$L_{p,0}=L_{p}(H_{n})$.

We also denote by $WL_{p,\lambda}\equiv WL_{p,\lambda}(H_{n})$ the weak Morrey
space of all functions $f\in WL_{p}^{loc}(H_{n})$ for which
\[
\left \Vert f\right \Vert _{WL_{p,\lambda}}\equiv \left \Vert f\right \Vert
_{WL_{p,\lambda}(H_{n})}=\sup_{g\in H_{n},r>0}r^{-\frac{\lambda}{p}}\Vert
f\Vert_{WL_{p}(B(g,r))}<\infty,
\]
where $WL_{p}(B(g,r))$ denotes the weak $L_{p}$-space of measurable functions
$f$ for which
\[%
\begin{split}
\Vert f\Vert_{WL_{p}(B(g,r))} &  \equiv \Vert f\chi_{_{B(g,r)}}\Vert
_{WL_{p}(H_{n})}\\
&  =\sup_{\tau>0}\tau \left \vert \left \{  h\in B(g,r):\,|f(h)|>\tau \right \}
\right \vert ^{1/{p}}\\
&  =\sup_{0<\tau \leq|B(g,r)|}\tau^{1/{p}}\left(  f\chi_{_{B(g,r)}}\right)
^{\ast}(\tau)<\infty.
\end{split}
\]
Here $g^{\ast}$ denotes the non-increasing rearrangement of a function $g$.

Note that%
\[
WL_{p}\left(  H_{n}\right)  =WL_{p,0}\left(  H_{n}\right)  \text{,
\  \ }L_{p,\lambda}\left(  H_{n}\right)  \subset WL_{p,\lambda}\left(
H_{n}\right)  \text{ \  \ and \  \ }\left \Vert f\right \Vert _{WL_{p,\lambda
}(H_{n})}\leq \left \Vert f\right \Vert _{L_{p,\lambda}\left(  H_{n}\right)  }.
\]

Let $|B(g,r)|$ be the Haar measure of the ball $B(g,r)$. Let $f$ be a given
integrable function on a ball $B\left(  g,r\right)  \subset G$. The fractional
maximal function $M_{\alpha}f$, $0\leq \alpha<Q$, of $f$ is defined by the
formula
\[
M_{\alpha}f(g)=\sup_{r>0}|B(g,r)|^{-1+\frac{\alpha}{Q}}\int \limits_{B(g,r)}%
|f(h)|dh.
\]

In the case of $\alpha=0$, the fractional maximal function $M_{\alpha}f$
coincides with the Hardy-Littlewood maximal function $Mf\equiv M_{0}f$ (see
\cite{Folland-Stein, Stein93}) and is closely related to the fractional
integral%
\[
\overline{T}_{\alpha}f\left(  g\right)  =\int \limits_{H_{n}}\frac{f\left(
h\right)  }{\left \vert g^{-1}h\right \vert ^{Q-\alpha}}dh\qquad0<\alpha<Q.
\]

The operators $M_{\alpha}$ and $\overline{T}_{\alpha}$ play an important role
in real and harmonic analysis (see \cite{Folland, Folland-Stein, Stein93,
Xiao}).

The classical Riesz potential $I_{\alpha}$ is defined on ${\mathbb{R}^{n}}$ by
the formula%
\[
I_{\alpha}f=\left(  -\Delta \right)  ^{-\frac{\alpha}{2}}f,\qquad0<\alpha<n,
\]
where $\Delta$ is the Laplacian operator. It is known that%
\[
I_{\alpha}f\left(  x\right)  =\frac{1}{\gamma \left(  \alpha \right)  }%
\int \limits_{{\mathbb{R}^{n}}}\frac{f\left(  y\right)  }{\left \vert
x-y\right \vert ^{n-\alpha}}dy\equiv \overline{T}_{\alpha}f\left(  x\right)  ,
\]
where $\gamma \left(  \alpha \right)  =\pi^{\frac{n}{2}}2^{\alpha}\frac
{\Gamma \left(  \frac{\alpha}{2}\right)  }{\Gamma \left(  \frac{n-\alpha}%
{2}\right)  }$. The Riesz potential on the Heisenberg group is defined in
terms of the sub-Laplacian $%
\mathcal{L}%
$\ $=\Delta_{H_{n}}$.

\begin{definition}
For $0<\alpha<Q$ the Riesz potential $I_{\alpha}$ is defined by on the
Schwartz space $S\left(  H_{n}\right)  $ by the formula%
\[
I_{\alpha}f\left(  g\right)  =%
\mathcal{L}%
^{-\frac{\alpha}{2}}f\left(  g\right)  \equiv%
{\displaystyle \int \limits_{0}^{\infty}}
e^{-r%
\mathcal{L}%
}f\left(  g\right)  r^{\frac{\alpha}{2}-1}dr,
\]
where%
\[
e^{-r%
\mathcal{L}%
}f\left(  g\right)  =\frac{1}{\Gamma \left(  \frac{\alpha}{2}\right)  }%
{\displaystyle \int \limits_{H_{n}}}
K_{r}\left(  h,g\right)  f\left(  h\right)  d\left(  h\right)
\]
is the semigroups of the operator $%
\mathcal{L}%
$.
\end{definition}

In \cite{Xiao}, relations between the Riesz potential and the heat kernel on
the Heisenberg group are studied. The following assertion $\left[
\text{\cite{Xiao}, Theorem 4.2}\right]  $ yields an expression for $I_{\alpha
}$, which allows us to discuss the boundedness of the Riesz potential.

\begin{theorem}
Let $q_{s}\left(  g\right)  $ be the heat kernel on $H_{n}$. If $0\leq
\alpha<Q$, then for $f\in S\left(  H_{n}\right)  $%
\[
I_{\alpha}f\left(  g\right)  =\frac{1}{\Gamma \left(  \frac{\alpha}{2}\right)
}%
{\displaystyle \int \limits_{0}^{\infty}}
s^{\frac{\alpha}{2}-1}q_{s}\left(  \cdot \right)  ds\ast f\left(  g\right)  .
\]

\end{theorem}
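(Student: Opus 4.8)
The plan is to read the claimed identity as the statement that the negative fractional power $\mathcal{L}^{-\alpha/2}$ is a convolution operator whose kernel is the time integral of the heat kernel $q_{s}$ against the weight $s^{\alpha/2-1}$. The three ingredients I would assemble are: the subordination representation of $\mathcal{L}^{-\alpha/2}$ already built into the Definition above; the fact that the heat semigroup $e^{-s\mathcal{L}}$ acts on $S(H_{n})$ by convolution with $q_{s}$; and an application of Fubini's theorem to exchange the time integral with the convolution integral.

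First I would record the elementary scalar identity
\[
\lambda^{-\frac{\alpha}{2}}=\frac{1}{\Gamma\left(\frac{\alpha}{2}\right)}\int_{0}^{\infty}s^{\frac{\alpha}{2}-1}e^{-s\lambda}\,ds,\qquad\lambda>0,\ 0<\alpha<Q,
\]
which follows from the definition of $\Gamma$ after the substitution $u=s\lambda$. Since $\mathcal{L}$ is a positive self-adjoint operator, the spectral functional calculus promotes this scalar identity to the operator identity
\[
\mathcal{L}^{-\frac{\alpha}{2}}f(g)=\frac{1}{\Gamma\left(\frac{\alpha}{2}\right)}\int_{0}^{\infty}s^{\frac{\alpha}{2}-1}e^{-s\mathcal{L}}f(g)\,ds,
\]
for $f\in S(H_{n})$, which is precisely the Definition of $I_{\alpha}f$. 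Invoking the defining property of the heat kernel, namely $e^{-s\mathcal{L}}f(g)=(q_{s}\ast f)(g)=\int_{H_{n}}q_{s}(h^{-1}g)f(h)\,dh$, and substituting yields
\[
I_{\alpha}f(g)=\frac{1}{\Gamma\left(\frac{\alpha}{2}\right)}\int_{0}^{\infty}s^{\frac{\alpha}{2}-1}\left(\int_{H_{n}}q_{s}(h^{-1}g)f(h)\,dh\right)ds.
\]

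The only genuinely analytic point, and the step I expect to be the main obstacle, is to justify interchanging the $s$-integral with the $h$-integral; once this is done the two integrations separate and one reads off
\[
I_{\alpha}f(g)=\frac{1}{\Gamma\left(\frac{\alpha}{2}\right)}\left(\int_{0}^{\infty}s^{\frac{\alpha}{2}-1}q_{s}\,ds\right)\ast f(g),
\]
which is the asserted formula. To license the interchange I would verify absolute convergence through Tonelli's theorem, using the Gaussian-type upper bound $q_{s}(g)\leq C\,s^{-Q/2}\exp(-c|g|^{2}/s)$ for the heat kernel on $H_{n}$. Performing the substitution $s\mapsto|g|^{2}s$ in the time integral shows that $\int_{0}^{\infty}s^{\frac{\alpha}{2}-1}q_{s}(g)\,ds\leq C\,|g|^{-(Q-\alpha)}$, where convergence of the resulting integral at its upper endpoint is exactly what forces $\alpha<Q$. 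This bound exhibits the subordinated kernel as locally integrable, since $Q-\alpha<Q$, and its decay at infinity paired against the rapidly decreasing $f\in S(H_{n})$ makes the full $(s,h)$-integral finite. Hence Fubini applies and the representation follows.
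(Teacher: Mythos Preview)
The paper does not supply its own proof of this statement: it is quoted verbatim as Theorem~4.2 of \cite{Xiao} and used as a black box, so there is no in-paper argument to compare against. Your outline is the standard and correct route to this identity: start from the subordination formula built into Definition~1, rewrite $e^{-s\mathcal{L}}f$ as convolution with the heat kernel $q_{s}$, and interchange the $s$- and $h$-integrals via Fubini, the latter justified by the Gaussian-type upper bound on $q_{s}$ and the resulting homogeneous estimate $\int_{0}^{\infty}s^{\alpha/2-1}q_{s}(g)\,ds\lesssim|g|^{-(Q-\alpha)}$. That is exactly how the result is proved in the literature.

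One minor remark: the statement as printed allows $\alpha=0$, but Definition~1 in the paper (and your Gamma-function identity) only make sense for $\alpha>0$; the endpoint $\alpha=0$ would have to be interpreted separately as $I_{0}=\mathrm{Id}$, which is not covered by your argument and is in any case outside the scope of the definition given. For $0<\alpha<Q$ your sketch is complete.
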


The Riesz potential $I_{\alpha}$ satisfies the estimate $\left[
\text{\cite{Xiao}, Theorem 4.4}\right]  $%
\[
\left \vert I_{\alpha}f\left(  g\right)  \right \vert \lesssim \overline
{T}_{\alpha}f\left(  g\right)  ,
\]
which provides a suitable estimate for the Riesz potential on the Heisenberg
group. It is well known that, see \cite{Folland-Stein, Stein93} for example,
$\overline{T}_{\alpha}$ is bounded from $L_{p}\left(  H_{n}\right)  $ to
$L_{q}\left(  H_{n}\right)  $ for all $p>1$ and $\frac{1}{p}-$ $\frac{1}{q}=$
$\frac{\alpha}{Q}>0$, and $\overline{T}_{\alpha}$ is also of weak type
$\left(  1,\frac{Q}{Q-\alpha}\right)  $(i.e. Hardy-Littlewood Sobolev inequality).

Spanne (published by Peetre \cite{Peetre}) and Adams \cite{Adams} have studied
boundedness of the fractional integral operator $\overline{T}_{\alpha}$ on
$L_{p,\lambda}\left(  {\mathbb{R}^{n}}\right)  $. This result has been
reproved by Chiarenza and Frasca \cite{ChFra}, and also studied in
\cite{Gurbuz0}.

After studying Morrey spaces in detail, researchers have passed to generalized
Morrey spaces. Recall that the concept of the generalized Morrey space
$M_{p,\varphi}\equiv M_{p,\varphi}(H_{n})$ on Heisenberg group has been
introduced in \cite{Guliyev}.

\begin{definition}
\cite{Guliyev} Let $\varphi(g,r)$ be a positive measurable function on
$H_{n}\times(0,\infty)$ and $1\leq p<\infty$. We denote by $M_{p,\varphi
}\equiv M_{p,\varphi}(H_{n})$ the generalized Morrey space, the space of all
functions $f\in L_{p}^{loc}(H_{n})$ with finite quasinorm
\[
\Vert f\Vert_{M_{p,\varphi}}=\sup \limits_{g\in H_{n},r>0}\varphi
(g,r)^{-1}\,|B(g,r)|^{-\frac{1}{p}}\, \Vert f\Vert_{L_{p}(B(g,r))}.
\]
Also by $WM_{p,\varphi}\equiv WM_{p,\varphi}(H_{n})$ we denote the weak
generalized Morrey space of all functions $f\in WL_{p}^{loc}(H_{n})$ for
which
\[
\Vert f\Vert_{WM_{p,\varphi}}=\sup \limits_{g\in H_{n},r>0}\varphi
(g,r)^{-1}\,|B(g,r)|^{-\frac{1}{p}}\, \Vert f\Vert_{WL_{p}(B(g,r))}<\infty.
\]

\end{definition}

According to this definition, we recover the Morrey space $L_{p,\lambda}$ and
weak Morrey space $WL_{p,\lambda}$ under the choice $\varphi(g,r)=r^{\frac
{\lambda-Q}{p}}$:
\[
L_{p,\lambda}=M_{p,\varphi}\mid_{\varphi(g,r)=r^{\frac{\lambda-Q}{p}}%
},~~~~~~~~WL_{p,\lambda}=WM_{p,\varphi}\mid_{\varphi(g,r)=r^{\frac{\lambda
-Q}{p}}}.
\]
In\  \cite{Guliyev}, Guliyev et al. prove the Spanne type boundedness of Riesz
potentials $I_{\alpha}$, $\alpha \in \left(  0,Q\right)  $ from one generalized
Morrey space $M_{p,\varphi_{1}}\left(  H_{n}\right)  $ to another
$M_{q,\varphi_{2}}\left(  H_{n}\right)  $, where $1<p<q<\infty$, $\frac{1}%
{p}-\frac{1}{q}=\frac{\alpha}{Q}$, $Q$ is the homogeneous dimension of $H_{n}$
and from the space $M_{1,\varphi_{1}}\left(  H_{n}\right)  $ to the weak space
$WM_{1,\varphi_{2}}\left(  H_{n}\right)  $, where $1<q<\infty$, $1-\frac{1}%
{q}=\frac{\alpha}{Q}$. They also prove the Adams type boundedness of the Riesz
potentials $I_{\alpha}$, $\alpha \in \left(  0,Q\right)  $ from $M_{p,\varphi
^{\frac{1}{p}}}\left(  H_{n}\right)  $ to another $M_{q,\varphi^{\frac{1}{q}}%
}\left(  H_{n}\right)  $ for $1<p<q<\infty$ and from the space $M_{1,\varphi
}\left(  H_{n}\right)  $ to the weak space $WM_{1,\varphi^{\frac{1}{q}}%
}\left(  H_{n}\right)  $ for $1<q<\infty$.

For a locally integrable function $b$ on $H_{n}$, suppose that the commutator
operator $T_{b,\alpha}$, $\alpha \in \left(  0,Q\right)  $ represents a linear
or a sublinear operator, which satisfies that for any $f\in L_{1}(H_{n})$ with
compact support and $x\notin suppf$
\begin{equation}
|T_{b,\alpha}f(g)|\leq c_{0}%
{\displaystyle \int \limits_{H_{n}}}
|b(g)-b(h)|\, \frac{|f(h)|}{|g^{-1}h|^{Q-\alpha}}dh,\label{e2}%
\end{equation}
where $c_{0}$ is independent of $f$ and $g$.

The condition (\ref{e2}) is satisfied by many interesting operators in
harmonic analysis, such as fractional maximal operator, fractional
Marcinkiewicz operator, fractional integral operator and so on (see
\cite{LLY}, \cite{SW} for details).

Let $T$ be a linear operator. For a locally integrable function $b$ on $H_{n}
$, we define the commutator $[b,T]$ by
\[
\lbrack b,T]f(x)=b(x)\,Tf(x)-T(bf)(x)
\]
for any suitable function $f$.

Let $b$ be a locally integrable function on $H_{n}$, then for $0<\alpha<Q$, we
define the linear commutator generated by fractional integral operator and $b$
and the sublinear commutator of the fractional maximal operator as follows,
respectively (see also \cite{LLY}).
\[
\lbrack b,\overline{T}_{\alpha}]f(g)\equiv b(g)\overline{T}_{\alpha
}f(g)-\overline{T}_{\alpha}(bf)(g)=\int \limits_{H_{n}}[b(g)-b(h)]\frac
{f(h)}{|g^{-1}h|^{Q-\alpha}}dh,
\]%
\[
M_{b,\alpha}\left(  f\right)  (g)=\sup_{r>0}|B(g,r)|^{-1+\frac{\alpha}{Q}}%
\int \limits_{B(g,r)}\left \vert b\left(  g\right)  -b\left(  h\right)
\right \vert |f(h)|dh.
\]

Now, we will examine some properties related to the space of functions of
Bounded Mean Oscillation, $BMO$, introduced by John and Nirenberg
\cite{John-Nirenberg} in 1961. This space has become extremely important in
various areas of analysis including harmonic analysis, PDEs and function
theory. $BMO$-spaces are also of interest since, in the scale of Lebesgue
spaces, they may be considered and appropriate substitute for $L_{\infty}$.
Appropriate in the sense that are spaces preserved by a wide class of
important operators such as the Hardy-Littlewood maximal function, the Hilbert
transform and which can be used as an end point in interpolating $L_{p}$ spaces.

Let us recall the defination of the space of $BMO(H_{n})$ (see, for example,
\cite{Folland-Stein, LLY, Stromberg}).

\begin{definition}
Suppose that $b\in L_{1}^{loc}(H_{n})$, let
\begin{equation}
\Vert b\Vert_{\ast}=\sup_{g\in H_{n},r>0}\frac{1}{|B(g,r)|}%
{\displaystyle \int \limits_{B(g,r)}}
|b(h)-b_{B(g,r)}|dh<\infty,\label{1*}%
\end{equation}
where
\[
b_{B(g,r)}=\frac{1}{|B(g,r)|}%
{\displaystyle \int \limits_{B(g,r)}}
b(h)dh.
\]
Define
\[
BMO(H_{n})=\{b\in L_{1}^{loc}(H_{n})~:~\Vert b\Vert_{\ast}<\infty \}.
\]

Endowed with the norm given in (\ref{1*}), $BMO(H_{n})$ becomes Banach space
provided we identify functions which differ a.e. by constant; clearly, $\Vert
b\Vert_{\ast}=0$ for $b\left(  h\right)  =c$ a.e. in $H_{n}$.
\end{definition}

\begin{remark}
Note that $L_{\infty}(H_{n})$ is contained in $BMO(H_{n})$ and we have
\[
\Vert b\Vert_{\ast}\leq2\Vert b\Vert_{\infty}.
\]
Moreover, $BMO$ contains unbounded functions, in fact the function
log$\left \vert h\right \vert $ on $H_{n}$, is in $BMO$ but it is not bounded,
so $L_{\infty}(H_{n})\subset BMO(H_{n})$.
\end{remark}

\begin{remark}
$(1)~~$ The John-Nirenberg inequality \cite{John-Nirenberg}: there are
constants $C_{1}$, $C_{2}>0$, such that for all $b\in BMO(H_{n})$ and
$\beta>0$
\[
\left \vert \left \{  g\in B\,:\,|b(g)-b_{B}|>\beta \right \}  \right \vert \leq
C_{1}|B|e^{-C_{2}\beta/\Vert b\Vert_{\ast}},~~~\forall B\subset H_{n}.
\]

$(2)~~$ The John-Nirenberg inequality implies that
\begin{equation}
\Vert b\Vert_{\ast}\thickapprox \sup_{g\in H_{n},r>0}\left(  \frac{1}{|B(g,r)|}%
{\displaystyle \int \limits_{B(g,r)}}
|b(h)-b_{B(g,r)}|^{p}dh\right)  ^{\frac{1}{p}}\label{5.1}%
\end{equation}
for $1<p<\infty$.

$(3)~~$ Let $b\in BMO(H_{n})$. Then there is a constant $C>0$ such that
\begin{equation}
\left \vert b_{B(g,r)}-b_{B(g,\tau)}\right \vert \leq C\Vert b\Vert_{\ast}%
\ln \frac{\tau}{r}~\text{for}~0<2r<\tau,\label{5.2}%
\end{equation}
where $C$ is independent of $b$, $g$, $r$ and $\tau$.
\end{remark}

Inspired by \cite{Guliyev}, in this paper, provided that $b\in BMO\left(
H_{n}\right)  $ and $T_{b,\alpha}$, $\alpha \in \left(  0,Q\right)  $ satisfying
condition (\ref{e2}) is a sublinear operator, we find the sufficient
conditions on the pair $(\varphi_{1},\varphi_{2})$ which ensures the Spanne
type boundedness of the commutator operators $T_{b,\alpha}$ from
$M_{p,\varphi_{1}}\left(  H_{n}\right)  $ to $M_{q,\varphi_{2}}\left(
H_{n}\right)  $, where $1<p<q<\infty$, $0<\alpha<\frac{Q}{p}$, $\frac{1}%
{q}=\frac{1}{p}-\frac{\alpha}{Q}$. We also find the sufficient conditions on
$\varphi$ which ensures the Adams type boundedness of the commutator operators
$T_{b,\alpha}$ from $M_{p,\varphi^{\frac{1}{p}}}\left(  H_{n}\right)  $ to
another $M_{q,\varphi^{\frac{1}{q}}}\left(  H_{n}\right)  $ for $1<p<q<\infty
$. In all the cases the conditions for the boundedness of $T_{b,\alpha}$ are
given in terms of Zygmund-type (supremal-type) integral inequalities on
$\left(  \varphi_{1},\varphi_{2}\right)  $ and $\varphi$ which do not assume
any assumption on monotonicity of $\varphi_{1},\varphi_{2}$ and $\varphi$ in
$r$. Our main results can be formulated as follows.

\begin{theorem}
\label{teo15}(Spanne type result) Let $1<p<\infty$, $0<\alpha<\frac{Q}{p}$,
$\frac{1}{q}=\frac{1}{p}-\frac{\alpha}{Q}$ and $b\in BMO\left(  H_{n}\right)
$. Let $T_{b,\alpha}$ be a sublinear operator satisfying condition (\ref{e2})
and bounded from $L_{p}(H_{n})$ to $L_{q}(H_{n})$. Let also, the pair
$(\varphi_{1},\varphi_{2})$ satisfies the condition%
\begin{equation}
\int \limits_{r}^{\infty}\left(  1+\ln \frac{\tau}{r}\right)  \frac
{\operatorname*{essinf}\limits_{\tau<s<\infty}\varphi_{1}\left(  g,s\right)
s^{\frac{Q}{p}}}{\tau^{\frac{Q}{q}+1}}dt\leq C\varphi_{2}\left(  g,r\right)
,\label{47}%
\end{equation}

Then, the operator $T_{b,\alpha}$ is bounded from $M_{p,\varphi_{1}}\left(
H_{n}\right)  $ to $M_{q,\varphi_{2}}\left(  H_{n}\right)  $. Moreover%
\[
\left \Vert T_{b,\alpha}f\right \Vert _{M_{q,\varphi_{2}}}\lesssim \left \Vert
b\right \Vert _{\ast}\left \Vert f\right \Vert _{M_{p,\varphi_{1}}}.
\]

\end{theorem}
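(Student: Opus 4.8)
The plan is to prove this Spanne-type commutator estimate by reducing the generalized Morrey space bound to a pointwise (local) estimate on each ball, and then feeding that local estimate into the hypothesis (\ref{47}). Let me think through the structure.

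We want to bound $\|T_{b,\alpha}f\|_{M_{q,\varphi_2}}$, which means bounding $\varphi_2(g,r)^{-1}|B(g,r)|^{-1/q}\|T_{b,\alpha}f\|_{L_q(B(g,r))}$ uniformly over $g$ and $r$.

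The standard Guliyev machinery: fix a ball $B = B(g_0, r)$, split $f = f_1 + f_2$ where $f_1 = f\chi_{2B}$ and $f_2 = f\chi_{(2B)^c}$. For $f_1$ use the assumed $L_p \to L_q$ boundedness of $T_{b,\alpha}$ together with BMO. For $f_2$ use the kernel estimate (\ref{e2}) since $h \notin \mathrm{supp} f_2$. The key is a local estimate of the form
$$\|T_{b,\alpha}f\|_{L_q(B(g_0,r))} \lesssim \|b\|_* |B|^{1/q} \int_r^\infty \left(1 + \ln\frac{\tau}{r}\right) \tau^{-Q/q - 1} \|f\|_{L_p(B(g_0,\tau))} \, \tau^{Q/p}\, d\tau$$
Wait, let me be careful with the exponents. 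The logarithmic factor $(1 + \ln(\tau/r))$ arises precisely from the BMO mean-oscillation estimate (\ref{5.2}).

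Here is how I would structure it. First, prove the pointwise/local inequality. For the near part $T_{b,\alpha}f_1$, apply boundedness from $L_p$ to $L_q$ and the John–Nirenberg estimate (\ref{5.1}) to control $\|(b - b_B)f_1\|_{L_p(2B)}$. For the far part, use (\ref{e2}) to write $|T_{b,\alpha}f_2(g)| \lesssim \int_{(2B)^c} |b(g) - b(h)| |f(h)| |g_0^{-1}h|^{-Q+\alpha}\,dh$, split $|b(g)-b(h)| \le |b(g) - b_B| + |b_B - b(h)|$, decompose the integral over annuli $B(g_0, 2^{k+1}r) \setminus B(g_0, 2^k r)$, and estimate each annular piece by Hölder's inequality. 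On each annulus $|g_0^{-1}h|^{-Q+\alpha} \approx (2^k r)^{-Q+\alpha}$, and the BMO inequality (\ref{5.2}) produces the factor $\ln(\tau/r)$ when comparing $b_B$ to $b_{B(g_0,\tau)}$. Summing the annuli and rewriting the sum as an integral in $\tau$ gives the local estimate above.

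\medskip

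The second step is purely the hypothesis. Once the local estimate holds, multiply both sides by $\varphi_2(g_0,r)^{-1}|B|^{-1/q}$, bound $\tau^{Q/p}\|f\|_{L_p(B(g_0,\tau))}^{-1}$ trivially... more precisely, I would insert $\|f\|_{M_{p,\varphi_1}}$ by writing $\|f\|_{L_p(B(g_0,\tau))} \le \varphi_1(g_0,\tau)|B(g_0,\tau)|^{1/p}\|f\|_{M_{p,\varphi_1}} \approx \varphi_1(g_0,\tau)\,\tau^{Q/p}\|f\|_{M_{p,\varphi_1}}$, and then pass to the essential infimum using the elementary bound $\varphi_1(g_0,\tau)\,\tau^{Q/p} \ge \operatorname*{essinf}_{\tau<s<\infty}\varphi_1(g_0,s)\,s^{Q/p}$ — though here one must be slightly careful and instead bound $\|f\|_{L_p(B(g_0,\tau))}$ below by the essinf expression to match (\ref{47}). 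The integral that results is exactly the left-hand side of (\ref{47}), which by hypothesis is $\lesssim \varphi_2(g_0,r)$, cancelling the prefactor and yielding the desired bound by $\|b\|_*\|f\|_{M_{p,\varphi_1}}$.

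\medskip

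I expect the main obstacle to be the far-part estimate, specifically the bookkeeping of the logarithmic BMO factor. One must carefully combine the two BMO ingredients — the John–Nirenberg form (\ref{5.1}) of $\|b - b_B\|$ on each annulus via Hölder, and the mean-comparison (\ref{5.2}) giving $|b_B - b_{B(g_0,\tau)}| \lesssim \|b\|_*\ln(\tau/r)$ — and verify that the dyadic sum telescopes into the stated integral with the correct exponent $Q/q + 1$ on $\tau$ and the correct power $s^{Q/p}$ inside the essinf. A secondary technical point is justifying the duality/rearrangement step that converts the discrete annular sum into the continuous integral in (\ref{47}) without losing the $(1 + \ln(\tau/r))$ weight; this is routine but exponent-sensitive, and is where sign or scaling errors are most likely to hide.
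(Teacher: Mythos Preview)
Your outline matches the paper's approach exactly: Lemma~\ref{Lemma 5} gives the local estimate
\[
\Vert T_{b,\alpha}f\Vert_{L_{q}(B(g,r))}\lesssim \Vert b\Vert_{\ast}\,r^{Q/q}\int_{2r}^{\infty}\Bigl(1+\ln\tfrac{\tau}{r}\Bigr)\tau^{-Q/q-1}\Vert f\Vert_{L_{p}(B(g,\tau))}\,d\tau
\]
via the splitting $f=f_{1}+f_{2}$, the $L_{p}\to L_{q}$ boundedness on $f_{1}$, and the kernel estimate (\ref{e2}) plus (\ref{5.1})--(\ref{5.2}) on $f_{2}$; the paper uses Fubini to convert $\int_{(2B)^{C}}|g^{-1}h|^{\alpha-Q}\,dh$ into $\int_{2r}^{\infty}\tau^{\alpha-Q-1}\,d\tau$ rather than a dyadic annulus sum, but the two are equivalent.

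The one place your write-up goes wrong is the passage from the local estimate to condition~(\ref{47}). You first bound $\Vert f\Vert_{L_{p}(B(g,\tau))}\le \varphi_{1}(g,\tau)\tau^{Q/p}\Vert f\Vert_{M_{p,\varphi_{1}}}$ and then try to replace $\varphi_{1}(g,\tau)\tau^{Q/p}$ by $\operatorname*{essinf}_{s>\tau}\varphi_{1}(g,s)s^{Q/p}$; but the essinf is \emph{smaller}, so that replacement goes the wrong way, and your fallback ``bound $\Vert f\Vert_{L_{p}}$ below by the essinf'' is not what is needed either. The correct move, and what the paper does in (\ref{10}), is to exploit the monotonicity of $\tau\mapsto\Vert f\Vert_{L_{p}(B(g,\tau))}$: for every $s>\tau$ one has
\[
\Vert f\Vert_{L_{p}(B(g,\tau))}\le \Vert f\Vert_{L_{p}(B(g,s))}\le \varphi_{1}(g,s)\,s^{Q/p}\,\Vert f\Vert_{M_{p,\varphi_{1}}},
\]
and taking the essential infimum over $s>\tau$ yields directly
\[
\Vert f\Vert_{L_{p}(B(g,\tau))}\le \Bigl(\operatorname*{essinf}_{s>\tau}\varphi_{1}(g,s)s^{Q/p}\Bigr)\Vert f\Vert_{M_{p,\varphi_{1}}},
\]
which is the upper bound that plugs into (\ref{47}). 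Once you make this adjustment the rest of your sketch is correct.
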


From Theorem \ref{teo15} we get the following new result.

\begin{corollary}
Let $1<p<\infty$, $0<\alpha<\frac{Q}{p}$, $\frac{1}{q}=\frac{1}{p}%
-\frac{\alpha}{Q}$, $b\in BMO\left(  H_{n}\right)  $ and the pair
$(\varphi_{1},\varphi_{2})$ satisfies condition (\ref{47}). Then, the
operators $M_{b,\alpha}$ and $[b,\overline{T}_{\alpha}]$ are bounded from
$M_{p,\varphi_{1}}\left(  H_{n}\right)  $ to $M_{q,\varphi_{2}}\left(
H_{n}\right)  $.
\end{corollary}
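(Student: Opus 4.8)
The plan is to deduce the corollary directly from Theorem \ref{teo15} by verifying that each of the two operators $[b,\overline{T}_\alpha]$ and $M_{b,\alpha}$ satisfies every hypothesis of that theorem. The constraints on $p$, $q$, $\alpha$, on $b\in BMO(H_n)$, and on the pair $(\varphi_1,\varphi_2)$ through condition (\ref{47}) are assumed verbatim in the corollary, so the only work is to check, for each operator, that it is sublinear, that it obeys the pointwise domination (\ref{e2}), and that it maps $L_p(H_n)$ boundedly into $L_q(H_n)$ (with operator norm controlled by $\|b\|_\ast$, so that the $\|b\|_\ast$ implicit in Theorem \ref{teo15} is produced). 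Once these are in hand, Theorem \ref{teo15} applies and yields the asserted boundedness from $M_{p,\varphi_1}(H_n)$ to $M_{q,\varphi_2}(H_n)$.

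For the linear commutator $[b,\overline{T}_\alpha]$ this is immediate. Linearity gives sublinearity, and the explicit identity recorded when the operator is introduced, namely $[b,\overline{T}_\alpha]f(g)=\int_{H_n}[b(g)-b(h)]\,|g^{-1}h|^{\alpha-Q}f(h)\,dh$, yields after pulling the absolute value inside the integral exactly (\ref{e2}) with $c_0=1$. The $L_p\to L_q$ boundedness, with norm $\lesssim\|b\|_\ast$, is the Heisenberg-group analogue of Chanillo's commutator theorem; I would obtain it from the Hardy--Littlewood--Sobolev boundedness of $\overline{T}_\alpha$ recalled in the excerpt together with the John--Nirenberg inequality and the $BMO$ oscillation bounds (\ref{5.1}), (\ref{5.2}), or simply cite \cite{LLY}.

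For the fractional maximal commutator $M_{b,\alpha}$ the only point needing care is (\ref{e2}); sublinearity is clear from the definition. To check (\ref{e2}) I would fix $g$ and $r$, use $|B(g,r)|=c_Q r^Q$, and exploit that every $h\in B(g,r)$ satisfies $|g^{-1}h|<r$. Since $\alpha<Q$, the exponent $\alpha-Q$ is negative, so $r^{\alpha-Q}\le|g^{-1}h|^{\alpha-Q}$ on $B(g,r)$; hence each average is bounded by $\int_{B(g,r)}|b(g)-b(h)|\,|g^{-1}h|^{\alpha-Q}|f(h)|\,dh\le\int_{H_n}|b(g)-b(h)|\,|f(h)|\,|g^{-1}h|^{\alpha-Q}\,dh$, and taking the supremum over $r>0$ (the right-hand side being independent of $r$) gives (\ref{e2}). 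With the known $L_p\to L_q$ bound for $M_{b,\alpha}$ (again via \cite{LLY}), Theorem \ref{teo15} then delivers the conclusion for both operators. I expect no genuine obstacle here; the only subtlety is matching the $\|b\|_\ast$ dependence of the two $L_p\to L_q$ norms so that the resulting Morrey estimate carries the factor $\|b\|_\ast$ supplied by Theorem \ref{teo15}.
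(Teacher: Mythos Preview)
Your proposal is correct and matches the paper's approach: the corollary is stated in the paper as an immediate consequence of Theorem \ref{teo15}, with no separate proof given. You have simply supplied the routine verifications (sublinearity, the pointwise size condition (\ref{e2}), and the $L_p\to L_q$ boundedness, the latter being cited in the paper from \cite{LLY}, \cite{Alphonse}, \cite{Folland-Stein}, \cite{Stromberg}) that the paper leaves implicit.
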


\begin{theorem}
\label{teo100}(Adams type result) Let $1<p<q<\infty$, $0<\alpha<\frac{Q}{p} $,
$b\in BMO\left(  H_{n}\right)  $ and let $\varphi \left(  g,\tau \right)  $
satisfies the conditions%
\begin{equation}
\sup_{r<\tau<\infty}\tau^{-\frac{Q}{p}}\left(  1+\ln \frac{\tau}{r}\right)
^{p}\operatorname*{essinf}\limits_{\tau<s<\infty}\varphi \left(  g,s\right)
s^{\frac{Q}{p}}\leq C\varphi \left(  g,r\right)  ,\label{67}%
\end{equation}
and%
\begin{equation}
\int \limits_{r}^{\infty}\left(  1+\ln \frac{\tau}{r}\right)  \tau^{\alpha
}\varphi \left(  g,\tau \right)  ^{\frac{1}{p}}\frac{d\tau}{\tau}\leq
Cr^{-\frac{\alpha p}{q-p}},\label{74}%
\end{equation}
where $C$ does not depend on $g\in H_{n}$ and $r>0$. Let also $T_{b,\alpha}$
be a sublinear operator satisfying condition (\ref{e2}) and the condition%
\begin{equation}
\left \vert T_{b,\alpha}\left(  f\chi_{B\left(  g,r\right)  }\right)  \left(
g\right)  \right \vert \lesssim r^{\alpha}M_{b}f\left(  g\right) \label{100}%
\end{equation}
holds for any ball $B\left(  g,r\right)  $.

Then the operator $T_{b,\alpha}$ is bounded from $M_{p,\varphi^{\frac{1}{p}}%
}\left(  H_{n}\right)  $ to $M_{q,\varphi^{\frac{1}{q}}}\left(  H_{n}\right)
$. Moreover, we have%
\[
\left \Vert T_{b,\alpha}f\right \Vert _{M_{q,\varphi^{\frac{1}{q}}}}%
\lesssim \Vert b\Vert_{\ast}\left \Vert f\right \Vert _{M_{p,\varphi^{\frac{1}%
{p}}}}.
\]

\end{theorem}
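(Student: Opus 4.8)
The plan is to follow the Adams--Hedberg scheme adapted to the commutator setting, using condition (\ref{100}) for the local part and condition (\ref{e2}) for the tail. Fix $g\in H_{n}$ and $r>0$, and split $f=f_{1}+f_{2}$ with $f_{1}=f\chi_{B(g,r)}$ and $f_{2}=f\chi_{H_{n}\setminus B(g,r)}$, so that by sublinearity $|T_{b,\alpha}f(g)|\leq|T_{b,\alpha}f_{1}(g)|+|T_{b,\alpha}f_{2}(g)|$. The local part is handled immediately by hypothesis (\ref{100}) applied to the ball $B(g,r)$, which gives $|T_{b,\alpha}f_{1}(g)|\lesssim r^{\alpha}M_{b}f(g)$.

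For the tail, since $g\notin\operatorname{supp}f_{2}$ I would invoke (\ref{e2}) to obtain
\[
|T_{b,\alpha}f_{2}(g)|\lesssim\int_{H_{n}\setminus B(g,r)}|b(g)-b(h)|\frac{|f(h)|}{|g^{-1}h|^{Q-\alpha}}dh,
\]
and then decompose the complement into the dyadic annuli $B(g,2^{k+1}r)\setminus B(g,2^{k}r)$. On each annulus I split $|b(g)-b(h)|\le|b(g)-b_{B(g,2^{k+1}r)}|+|b_{B(g,2^{k+1}r)}-b(h)|$; the first difference is controlled by $\Vert b\Vert_{\ast}(1+\ln\frac{t}{r})$ through (\ref{5.2}), while the second is absorbed with H\"{o}lder's inequality and the John--Nirenberg equivalence (\ref{5.1}). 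Writing the result as an integral in $t$ and using $\int_{B(g,t)}|f|\lesssim t^{Q/p'}\Vert f\Vert_{L_{p}(B(g,t))}$ yields
\[
|T_{b,\alpha}f_{2}(g)|\lesssim\Vert b\Vert_{\ast}\int_{r}^{\infty}\left(1+\ln\frac{t}{r}\right)t^{\alpha-\frac{Q}{p}}\Vert f\Vert_{L_{p}(B(g,t))}\frac{dt}{t}.
\]
Inserting the Morrey bound $\Vert f\Vert_{L_{p}(B(g,t))}\lesssim t^{\frac{Q}{p}}\varphi(g,t)^{\frac{1}{p}}\Vert f\Vert_{M_{p,\varphi^{\frac{1}{p}}}}$ and applying hypothesis (\ref{74}) collapses the tail to $\Vert b\Vert_{\ast}\Vert f\Vert_{M_{p,\varphi^{\frac{1}{p}}}}r^{-\frac{\alpha p}{q-p}}$.

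Combining the two parts gives, for every $r>0$, the estimate $|T_{b,\alpha}f(g)|\lesssim r^{\alpha}M_{b}f(g)+\Vert b\Vert_{\ast}\Vert f\Vert_{M_{p,\varphi^{\frac{1}{p}}}}r^{-\frac{\alpha p}{q-p}}$. I would then optimize in $r$: balancing the two terms at $r^{\alpha q/(q-p)}\sim\Vert b\Vert_{\ast}\Vert f\Vert_{M_{p,\varphi^{\frac{1}{p}}}}/M_{b}f(g)$ produces the Hedberg-type pointwise bound
\[
|T_{b,\alpha}f(g)|\lesssim\bigl(M_{b}f(g)\bigr)^{\frac{p}{q}}\bigl(\Vert b\Vert_{\ast}\Vert f\Vert_{M_{p,\varphi^{\frac{1}{p}}}}\bigr)^{1-\frac{p}{q}}.
\]
Raising to the $q$-th power, integrating over an arbitrary ball $B$, and dividing by $|B|^{\frac{1}{q}}\varphi(\cdot)^{\frac{1}{q}}$ reduces the $M_{q,\varphi^{\frac{1}{q}}}$ quasinorm of $T_{b,\alpha}f$ to $\bigl(\Vert b\Vert_{\ast}\Vert f\Vert_{M_{p,\varphi^{\frac{1}{p}}}}\bigr)^{1-\frac{p}{q}}\Vert M_{b}f\Vert_{M_{p,\varphi^{\frac{1}{p}}}}^{\frac{p}{q}}$, since $\bigl(|B|^{-\frac{1}{p}}\varphi^{-\frac{1}{p}}\Vert M_{b}f\Vert_{L_{p}(B)}\bigr)^{\frac{p}{q}}\le\Vert M_{b}f\Vert_{M_{p,\varphi^{\frac{1}{p}}}}^{\frac{p}{q}}$. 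The desired inequality then follows provided $\Vert M_{b}f\Vert_{M_{p,\varphi^{\frac{1}{p}}}}\lesssim\Vert b\Vert_{\ast}\Vert f\Vert_{M_{p,\varphi^{\frac{1}{p}}}}$, i.e. the boundedness of the maximal commutator $M_{b}=M_{b,0}$ on $M_{p,\varphi^{\frac{1}{p}}}(H_{n})$; this is exactly what hypothesis (\ref{67}), with its $(1+\ln\frac{\tau}{r})^{p}$ weight, is designed to guarantee.

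I expect the main obstacle to be the tail estimate: producing the correct logarithmic factor $1+\ln\frac{t}{r}$ from the $BMO$ oscillation of $b$ across the dyadic annuli, and verifying that the resulting integral is \emph{precisely} controlled by (\ref{74}) without any monotonicity assumption on $\varphi$ in $r$. A secondary point needing care is the self-map boundedness of $M_{b}$ on $M_{p,\varphi^{\frac{1}{p}}}$ under the supremal condition (\ref{67}); this rests on an analogous ($\alpha=0$) local decomposition for $M_{b}$ and must be established before the Hedberg optimization can be cashed in.
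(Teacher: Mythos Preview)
Your proposal is correct and follows essentially the same route as the paper: local part via (\ref{100}), tail via (\ref{e2}) with the $BMO$ logarithmic gain, then the Hedberg optimization in $r$, and finally the self-map boundedness of $M_{b}$ on $M_{p,\varphi^{1/p}}$ (which the paper packages as Corollary~\ref{corollary2} derived from Lemma~\ref{Lemma 1} and Theorem~\ref{teo10}). The only cosmetic differences are that the paper splits at $2B$ rather than $B$, derives the tail bound (\ref{75}) by the Fubini trick of Section~2 rather than dyadic annuli, and (harmlessly) carries an extra $\Vert b\Vert_{\ast}$ on the local term in (\ref{76}); your placement of the $\Vert b\Vert_{\ast}$ factors is in fact cleaner and yields the stated linear dependence on $\Vert b\Vert_{\ast}$ directly.
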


From Theorem \ref{teo100}, we get the following new result.

\begin{corollary}
Let $1<p<\infty$, $0<\alpha<\frac{Q}{p}$, $p<q$, $b\in BMO\left(
H_{n}\right)  $ and let also $\varphi \left(  x,\tau \right)  $ satisfies
conditions (\ref{67}) and (\ref{74}). Then the operators $M_{b,\alpha}$ and
$[b,\overline{T}_{\alpha}]$ are bounded from $M_{p,\varphi^{\frac{1}{p}}%
}\left(  H_{n}\right)  $ to $M_{q,\varphi^{\frac{1}{q}}}\left(  H_{n}\right)
$.
\end{corollary}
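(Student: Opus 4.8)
The plan is to follow the Adams--Hedberg scheme: first derive a pointwise inequality for $|T_{b,\alpha}f(g)|$ by splitting $f$ into a local and a global part relative to a ball $B(g,t)$ centred at the point $g$ under consideration, then optimize the resulting bound over the free radius $t>0$, and finally pass to the Morrey norm using the boundedness of the maximal commutator $M_b=M_{b,0}$ on the appropriate generalized Morrey space. Conditions (\ref{100}) and (\ref{e2}) are tailored to the local and global pieces respectively, condition (\ref{74}) governs the global tail, and condition (\ref{67}) is exactly what is needed to control $M_b$.

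Fix $g\in H_{n}$ and $t>0$, and write $f=f\chi_{B(g,t)}+f\chi_{B^{C}(g,t)}$. For the local piece, hypothesis (\ref{100}) gives at once $|T_{b,\alpha}(f\chi_{B(g,t)})(g)|\lesssim t^{\alpha}M_{b}f(g)$. For the global piece I would invoke the pointwise domination (\ref{e2}), decompose $B^{C}(g,t)$ into the dyadic annuli $B(g,2^{k+1}t)\setminus B(g,2^{k}t)$, and on each annulus replace $|g^{-1}h|^{Q-\alpha}$ by $(2^{k}t)^{Q-\alpha}$. Splitting $|b(g)-b(h)|\le|b(g)-b_{B(g,2^{k+1}t)}|+|b_{B(g,2^{k+1}t)}-b(h)|$ and applying H\"{o}lder's inequality with exponents $p,p'$ on each annulus, the first contribution is controlled by the logarithmic growth estimate (\ref{5.2}) and the second by the John--Nirenberg consequence (\ref{5.1}), i.e. $\|b-b_{B}\|_{L_{p'}(B)}\lesssim\|b\|_{\ast}|B|^{1/p'}$. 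After summing the geometric series (re-expressed as an integral) and inserting $\|f\|_{L_{p}(B(g,\tau))}\le\varphi(g,\tau)^{1/p}|B(g,\tau)|^{1/p}\|f\|_{M_{p,\varphi^{1/p}}}$, the global piece becomes $\lesssim\|b\|_{\ast}\|f\|_{M_{p,\varphi^{1/p}}}\int_{t}^{\infty}(1+\ln\tfrac{\tau}{t})\,\tau^{\alpha}\varphi(g,\tau)^{1/p}\tfrac{d\tau}{\tau}$, which by condition (\ref{74}) is $\lesssim\|b\|_{\ast}\|f\|_{M_{p,\varphi^{1/p}}}\,t^{-\alpha p/(q-p)}$.

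Combining the two pieces yields, for every $t>0$, the inequality $|T_{b,\alpha}f(g)|\lesssim t^{\alpha}M_{b}f(g)+\|b\|_{\ast}\|f\|_{M_{p,\varphi^{1/p}}}\,t^{-\alpha p/(q-p)}$. Minimizing the right-hand side over $t$ balances the exponents $\alpha$ and $\alpha p/(q-p)$ so that the power of $M_{b}f(g)$ comes out to be exactly $p/q$, giving the Hedberg-type bound $|T_{b,\alpha}f(g)|\lesssim(M_{b}f(g))^{p/q}\,(\|b\|_{\ast}\|f\|_{M_{p,\varphi^{1/p}}})^{1-p/q}$. Raising this to the $q$-th power, integrating over $B(g,r)$, and taking $q$-th roots then gives $\|T_{b,\alpha}f\|_{L_{q}(B(g,r))}\lesssim(\|b\|_{\ast}\|f\|_{M_{p,\varphi^{1/p}}})^{1-p/q}\,\|M_{b}f\|_{L_{p}(B(g,r))}^{p/q}$.

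It remains to control $\|M_{b}f\|_{L_{p}(B(g,r))}$, and this is where condition (\ref{67}) enters: its $(1+\ln\tfrac{\tau}{r})^{p}$ factor matches the logarithmic nature of the maximal commutator and yields the boundedness of $M_{b}$ on $M_{p,\varphi^{1/p}}(H_{n})$, hence $\|M_{b}f\|_{L_{p}(B(g,r))}\lesssim\|b\|_{\ast}\,\varphi(g,r)^{1/p}|B(g,r)|^{1/p}\|f\|_{M_{p,\varphi^{1/p}}}$. Substituting this and collecting the powers, the factor $\varphi(g,r)^{1/q}|B(g,r)|^{1/q}$ appears with precisely the right exponent (since $\tfrac{1}{p}\cdot\tfrac{p}{q}=\tfrac{1}{q}$), so that $\varphi(g,r)^{-1/q}|B(g,r)|^{-1/q}\|T_{b,\alpha}f\|_{L_{q}(B(g,r))}\lesssim\|b\|_{\ast}\|f\|_{M_{p,\varphi^{1/p}}}$; taking the supremum over $g\in H_{n}$ and $r>0$ completes the proof. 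I expect the main obstacle to be the global-part estimate: tracking the two $BMO$ contributions across the dyadic annuli so as to produce exactly the logarithmic weight $(1+\ln\tfrac{\tau}{t})$ and the power $\tau^{\alpha}\varphi(g,\tau)^{1/p}$ demanded by (\ref{74}), together with establishing the $M_{p,\varphi^{1/p}}$-boundedness of $M_{b}$ under (\ref{67}).
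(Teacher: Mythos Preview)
Your proposal is correct and follows precisely the Adams--Hedberg scheme that the paper uses to prove Theorem~\ref{teo100}, from which the corollary is obtained as an immediate consequence; the only cosmetic difference is that you handle the global piece via dyadic annuli whereas the paper uses Fubini's theorem to pass to the integral $\int_{r}^{\infty}(1+\ln\tfrac{\tau}{r})\tau^{\alpha-Q/p-1}\|f\|_{L_p(B(g,\tau))}\,d\tau$, and both routes yield the same tail estimate. The one detail left implicit in your write-up---and equally implicit in the paper---is the verification that the \emph{specific} operators $M_{b,\alpha}$ and $[b,\overline{T}_{\alpha}]$ actually satisfy the local hypothesis~(\ref{100}); this is a routine computation (split $B(g,r)$ into dyadic shells $\{2^{-k-1}r\le|g^{-1}h|<2^{-k}r\}$ for $[b,\overline{T}_{\alpha}]$, and compare the supremum over $\rho\le r$ and $\rho>r$ for $M_{b,\alpha}$), but strictly speaking it is what distinguishes the corollary from the theorem.
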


At last, throughout the paper we use the letter $C$ for a positive constant,
independent of appropriate parameters and not necessarily the same at each
occurrence.By $A\lesssim B$ we mean that $A\leq CB$ with some positive
constant $C$ independent of appropriate quantities. If $A\lesssim B$ and
$B\lesssim A$, we write $A\approx B$ and say that $A$ and $B$ are equivalent.

\section{Some Lemmas}

To prove the main results (Theorems \ref{teo15} and \ref{teo100}), we need the
following lemmas. Firstly, for the proof of Spanne type results, we need
following Lemma \ref{Lemma 5}.

\begin{lemma}
\label{Lemma 5}(Our main lemma) Let $1<p<\infty$, $0<\alpha<\frac{Q}{p}$,
$\frac{1}{q}=\frac{1}{p}-\frac{\alpha}{Q}$, $b\in BMO\left(  H_{n}\right)  $,
and $T_{b,\alpha}$ is a sublinear operator satisfying condition (\ref{e2}) and
bounded from $L_{p}(H_{n})$ to $L_{q}(H_{n})$. Then, the inequality
\begin{equation}
\Vert T_{b,\alpha}f\Vert_{L_{q}(B(g,r))}\lesssim \Vert b\Vert_{\ast}%
\,r^{\frac{Q}{q}}\int \limits_{2r}^{\infty}\left(  1+\ln \frac{\tau}{r}\right)
\tau^{-\frac{Q}{q}-1}\Vert f\Vert_{L_{p}(B(g,\tau))}d\tau \label{40}%
\end{equation}
holds for any ball $B(g,r)$ and for all $f\in L_{p}^{loc}(H_{n})$.
\end{lemma}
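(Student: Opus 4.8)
The plan is to estimate $T_{b,\alpha}f$ on a fixed ball $B=B(g,r)$ by splitting the input as $f = f_1 + f_2$, where $f_1 = f\chi_{2B}$ and $f_2 = f\chi_{(2B)^C}$ with $2B = B(g,2r)$. By sublinearity we have $\|T_{b,\alpha}f\|_{L_q(B)} \le \|T_{b,\alpha}f_1\|_{L_q(B)} + \|T_{b,\alpha}f_2\|_{L_q(B)}$, and I would bound the two pieces by entirely different mechanisms. For the \emph{local part} $f_1$, I would use the assumed $L_p(H_n)\to L_q(H_n)$ boundedness of $T_{b,\alpha}$ together with the $BMO$ factor absorbed into the operator norm, giving $\|T_{b,\alpha}f_1\|_{L_q(B)} \lesssim \|b\|_{\ast}\|f\|_{L_p(2B)}$. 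The final step here is to rewrite $\|f\|_{L_p(2B)}$ as a tail integral: since $\|f\|_{L_p(B(g,\tau))}$ is nondecreasing in $\tau$, one has $\|f\|_{L_p(2B)} \approx r^{Q/q}\int_{2r}^\infty \tau^{-Q/q-1}\|f\|_{L_p(B(g,2r))}\,d\tau \le r^{Q/q}\int_{2r}^\infty (1+\ln\frac{\tau}{r})\tau^{-Q/q-1}\|f\|_{L_p(B(g,\tau))}\,d\tau$, which fits under the right-hand side of \eqref{40}.

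The substantive work is the \emph{nonlocal part} $f_2$. Here I would invoke the pointwise kernel bound \eqref{e2}: for $h\in B$ and $z\in (2B)^C$ we have $|g^{-1}z|\approx |h^{-1}z|$ by the triangle inequality, so for any $h\in B$,
\[
|T_{b,\alpha}f_2(h)| \lesssim \int_{(2B)^C} |b(h)-b(z)|\,\frac{|f(z)|}{|h^{-1}z|^{Q-\alpha}}\,dz \lesssim \int_{(2B)^C} |b(h)-b(z)|\,\frac{|f(z)|}{|g^{-1}z|^{Q-\alpha}}\,dz.
\]
I would split $|b(h)-b(z)| \le |b(h)-b_B| + |b_B - b(z)|$ and decompose the domain $(2B)^C$ into dyadic-type annuli $B(g,2^{k+1}r)\setminus B(g,2^k r)$, or equivalently pass to the continuous annular form $\int_{2r}^\infty\!\int_{|g^{-1}z|\approx \tau}$. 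On each annulus $|g^{-1}z|^{-(Q-\alpha)}\approx \tau^{-(Q-\alpha)}$, and applying Hölder's inequality in $z$ over $B(g,\tau)$ converts the inner integrals into $\|f\|_{L_p(B(g,\tau))}$ times a power of $\tau$. The term $|b_B - b(z)|$ is controlled by combining $|b_B - b_{B(g,\tau)}| \lesssim \|b\|_\ast\ln\frac{\tau}{r}$ from \eqref{5.2} with $\|b(\cdot)-b_{B(g,\tau)}\|_{L_{p'}(B(g,\tau))}\lesssim \|b\|_\ast |B(g,\tau)|^{1/p'}$ from the John–Nirenberg consequence \eqref{5.1}; this is exactly what produces the logarithmic factor $(1+\ln\frac{\tau}{r})$ in \eqref{40}. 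Tracking the powers of $\tau$ using $\frac{1}{q}=\frac{1}{p}-\frac{\alpha}{Q}$ (so $-(Q-\alpha)+\frac{Q}{p'}+Q = \frac{Q}{q}$, up to the measure normalization) yields the integrand $\tau^{-Q/q-1}\|f\|_{L_p(B(g,\tau))}$.

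The remaining step is to take the $L_q(B)$ norm in the variable $h$ of the resulting pointwise bound on $|T_{b,\alpha}f_2(h)|$. The only $h$-dependence left is through $|b(h)-b_B|$ multiplying the $\tau$-integral, so I would apply \eqref{5.1} once more, with exponent $q$, to get $\||b(\cdot)-b_B|\|_{L_q(B)} \lesssim \|b\|_\ast r^{Q/q}$, which supplies the factor $\|b\|_\ast r^{Q/q}$ in front of the integral in \eqref{40}. The main obstacle I anticipate is \emph{bookkeeping the $BMO$ contributions correctly}: there are three distinct places where $\|b\|_\ast$ enters (the local-part operator norm, the annular $|b_B - b_{B(g,\tau)}|$ logarithmic growth, and the final $L_q$ integration of $|b(h)-b_B|$), and one must verify that the product of the two John–Nirenberg factors together with the $\ln\frac{\tau}{r}$ growth collapses into the single clean factor $\|b\|_\ast(1+\ln\frac{\tau}{r})$ rather than a higher power, while simultaneously confirming that the exponent arithmetic driven by the scaling relation $\frac{1}{q}=\frac{1}{p}-\frac{\alpha}{Q}$ produces precisely $\tau^{-Q/q-1}$ under the integral.
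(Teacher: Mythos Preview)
Your plan is correct and follows essentially the same route as the paper: the same $f_1/f_2$ split, the $L_p\to L_q$ boundedness for the local piece, the pointwise kernel bound \eqref{e2} with the $|b(h)-b_B|+|b_B-b(z)|$ decomposition for the nonlocal piece, Fubini/H\"older on annuli together with \eqref{5.1}--\eqref{5.2} to produce the $(1+\ln\frac{\tau}{r})$ factor, and the final conversion of $\|f\|_{L_p(2B)}$ into the tail integral. Your closing worry is unfounded: the three $\|b\|_\ast$ contributions enter \emph{additively} (local term, $J_1$-type term, $J_2$-type term), and within the $J_1$-type term the two BMO estimates are again summed, not multiplied, so a single power of $\|b\|_\ast$ and the factor $(1+\ln\frac{\tau}{r})$ emerge automatically.
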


\begin{proof}
Let $1<p<\infty$, $0<\alpha<\frac{n}{p}$ and $\frac{1}{q}=\frac{1}{p}%
-\frac{\alpha}{n}$. For an arbitrary ball $B=B\left(  g,r\right)  $ we set
$f=f_{1}+f_{2}$, where \ $f_{1}=f\chi_{2B}$, \ $f_{2}=f\chi_{\left(
2B\right)  ^{C}}$ and $2B=B\left(  g,2r\right)  $. Then we have
\[
\left \Vert T_{b,\alpha}f\right \Vert _{L_{q}\left(  B\right)  }\leq \left \Vert
T_{b,\alpha}f_{1}\right \Vert _{L_{q}\left(  B\right)  }+\left \Vert
T_{b,\alpha}f_{2}\right \Vert _{L_{q}\left(  B\right)  }.
\]
From the boundedness of $T_{b,\alpha}$ from $L_{p}(H_{n})$ to $L_{q}(H_{n})$
(see, for example, \cite{Folland-Stein, Stromberg}) it follows that:%
\begin{align*}
\left \Vert T_{b,\alpha}f_{1}\right \Vert _{L_{q}\left(  B\right)  }  &
\leq \left \Vert T_{b,\alpha}f_{1}\right \Vert _{L_{q}\left(  H_{n}\right)  }\\
& \lesssim \left \Vert b\right \Vert _{\ast}\left \Vert f_{1}\right \Vert
_{L_{p}\left(  H_{n}\right)  }=\left \Vert b\right \Vert _{\ast}\left \Vert
f\right \Vert _{L_{p}\left(  2B\right)  }.
\end{align*}
It is known that $g\in B$, $h\in \left(  2B\right)  ^{C}$, which implies
$\frac{1}{2}\left \vert h^{-1}w\right \vert \leq \left \vert g^{-1}h\right \vert
\leq \frac{3}{2}\left \vert h^{-1}w\right \vert $. Then for $g\in B$, we have%
\[
\left \vert T_{b,\alpha}f_{2}\left(  g\right)  \right \vert \lesssim
\int \limits_{\left(  2B\right)  ^{C}}\left \vert b\left(  h\right)  -b\left(
g\right)  \right \vert \frac{\left \vert f\left(  h\right)  \right \vert
}{\left \vert g^{-1}h\right \vert ^{Q-\alpha}}dh.
\]
Hence we get%
\begin{align*}
\left \Vert T_{b,\alpha}f_{2}\right \Vert _{L_{q}\left(  B\right)  }  &
\lesssim \left(  \int \limits_{B}\left(  \int \limits_{\left(  2B\right)  ^{C}%
}\left \vert b\left(  h\right)  -b\left(  g\right)  \right \vert \frac
{\left \vert f\left(  y\right)  \right \vert }{\left \vert g^{-1}h\right \vert
^{Q-\alpha}}dh\right)  ^{q}dg\right)  ^{\frac{1}{q}}\\
& \lesssim \left(  \int \limits_{B}\left(  \int \limits_{\left(  2B\right)  ^{C}%
}\left \vert b\left(  h\right)  -b\left(  g\right)  \right \vert \frac
{\left \vert f\left(  h\right)  \right \vert }{\left \vert g^{-1}h\right \vert
^{Q-\alpha}}dh\right)  ^{q}dg\right)  ^{\frac{1}{q}}\\
& +\left(  \int \limits_{B}\left(  \int \limits_{\left(  2B\right)  ^{C}%
}\left \vert b\left(  h\right)  -b\left(  g\right)  \right \vert \frac
{\left \vert f\left(  y\right)  \right \vert }{\left \vert g^{-1}h\right \vert
^{Q-\alpha}}dh\right)  ^{q}dg\right)  ^{\frac{1}{q}}\\
& =J_{1}+J_{2}.
\end{align*}
We have the following estimation of $J_{1}$. When $\frac{1}{\mu}+\frac{1}%
{p}=1$, by the Fubini's theorem%
\begin{align*}
J_{1}  & \approx r^{\frac{Q}{q}}\int \limits_{\left(  2B\right)  ^{C}%
}\left \vert b\left(  h\right)  -b_{B}\right \vert \frac{\left \vert f\left(
h\right)  \right \vert }{\left \vert g^{-1}h\right \vert ^{Q-\alpha}}dh\\
& \approx r^{\frac{Q}{q}}\int \limits_{\left(  2B\right)  ^{C}}\left \vert
b\left(  h\right)  -b_{B}\right \vert \left \vert f\left(  h\right)  \right \vert
\int \limits_{\left \vert g^{-1}h\right \vert }^{\infty}\frac{d\tau}%
{\tau^{Q+1-\alpha}}dh\\
& \approx r^{\frac{Q}{q}}\int \limits_{2r}^{\infty}\int \limits_{2r\leq
\left \vert g^{-1}h\right \vert \leq \tau}\left \vert b\left(  h\right)
-b_{B}\right \vert \left \vert f\left(  h\right)  \right \vert dh\frac{d\tau
}{\tau^{Q+1-\alpha}}\\
& \lesssim r^{\frac{Q}{q}}\int \limits_{2r}^{\infty}\int \limits_{B\left(
g,\tau \right)  }\left \vert b\left(  h\right)  -b_{B}\right \vert \left \vert
f\left(  h\right)  \right \vert dh\frac{d\tau}{\tau^{Q+1-\alpha}}%
\end{align*}
is valid. Applying the H\"{o}lder's inequality and by (\ref{5.1}),
(\ref{5.2}), we get%
\begin{align*}
J_{1}  & \lesssim r^{\frac{Q}{q}}\int \limits_{2r}^{\infty}\int
\limits_{B\left(  g,\tau \right)  }\left \vert b\left(  h\right)  -b_{B\left(
g,\tau \right)  }\right \vert \left \vert f\left(  h\right)  \right \vert
dh\frac{d\tau}{\tau^{Q+1-\alpha}}\\
& +r^{\frac{Q}{q}}\int \limits_{2r}^{\infty}\left \vert b_{B\left(  g,r\right)
}-b_{B\left(  g,\tau \right)  }\right \vert \int \limits_{B\left(  g,\tau \right)
}\left \vert f\left(  h\right)  \right \vert dh\frac{d\tau}{\tau^{Q+1-\alpha}}\\
& \lesssim r^{\frac{Q}{q}}\int \limits_{2r}^{\infty}\left \Vert \left(  b\left(
\cdot \right)  -b_{B\left(  g,\tau \right)  }\right)  \right \Vert _{L_{\mu
}\left(  B\left(  g,\tau \right)  \right)  }\left \Vert f\right \Vert
_{L_{p}\left(  B\left(  g,\tau \right)  \right)  }\frac{d\tau}{\tau
^{Q+1-\alpha}}\\
& +r^{\frac{Q}{q}}\int \limits_{2r}^{\infty}\left \vert b_{B\left(  g,r\right)
}-b_{B\left(  g,\tau \right)  }\right \vert \left \Vert f\right \Vert
_{L_{p}\left(  B\left(  g,\tau \right)  \right)  }\left \vert B\left(
g,\tau \right)  \right \vert ^{1-\frac{1}{p}}\frac{d\tau}{\tau^{Q+1-\alpha}}\\
& \lesssim \Vert b\Vert_{\ast}\,r^{\frac{Q}{q}}\int \limits_{2r}^{\infty}\left(
1+\ln \frac{\tau}{r}\right)  \left \Vert f\right \Vert _{L_{p}\left(  B\left(
g,\tau \right)  \right)  }\frac{d\tau}{\tau^{\frac{Q}{q}+1}}.
\end{align*}
In order to estimate $J_{2}$ note that%
\[
J_{2}=\left \Vert \left(  b\left(  \cdot \right)  -b_{B\left(  g,\tau \right)
}\right)  \right \Vert _{L_{q}\left(  B\left(  g,\tau \right)  \right)  }%
\int \limits_{\left(  2B\right)  ^{C}}\frac{\left \vert f\left(  h\right)
\right \vert }{\left \vert g^{-1}h\right \vert ^{Q-\alpha}}dh.
\]

By (\ref{5.1}), we get%
\[
J_{2}\lesssim \Vert b\Vert_{\ast}\,r^{\frac{Q}{q}}\int \limits_{\left(
2B\right)  ^{C}}\frac{\left \vert f\left(  h\right)  \right \vert }{\left \vert
g^{-1}h\right \vert ^{Q-\alpha}}dh.
\]
On the other hand, by the Fubini's theorem, we have%
\begin{align*}
\int \limits_{\left(  2B\right)  ^{C}}\frac{\left \vert f\left(  w\right)
\right \vert }{\left \vert g^{-1}w\right \vert ^{Q-\alpha}}dw  & \approx
\int \limits_{\left(  2B\right)  ^{C}}\left \vert f\left(  w\right)  \right \vert
\int \limits_{\left \vert g^{-1}w\right \vert }^{\infty}\frac{d\tau}%
{\tau^{Q+1-\alpha}}dw\\
& \approx \int \limits_{2r}^{\infty}\int \limits_{2r\leq \left \vert g^{-1}%
w\right \vert \leq \tau}\left \vert f\left(  w\right)  \right \vert dw\frac{d\tau
}{\tau^{Q+1-\alpha}}\\
& \lesssim \int \limits_{2r}^{\infty}\int \limits_{B\left(  g,\tau \right)
}\left \vert f\left(  w\right)  \right \vert dw\frac{d\tau}{\tau^{Q+1-\alpha}}.
\end{align*}
Applying the H\"{o}lder's inequality, we get%
\begin{align}
& \int \limits_{\left(  2B\right)  ^{C}}\frac{\left \vert f\left(  w\right)
\right \vert }{\left \vert g^{-1}w\right \vert ^{Q-\alpha}}dw\nonumber \\
& \lesssim \int \limits_{2r}^{\infty}\left \Vert f\right \Vert _{L_{p}\left(
B\left(  g,\tau \right)  \right)  }\frac{d\tau}{\tau^{\frac{Q}{q}+1}%
}.\label{e310}%
\end{align}

Thus, by (\ref{e310})%
\[
J_{2}\lesssim \Vert b\Vert_{\ast}\,r^{\frac{Q}{q}}\int \limits_{2r}^{\infty
}\left \Vert f\right \Vert _{L_{p}\left(  B\left(  g,\tau \right)  \right)
}\frac{d\tau}{\tau^{\frac{Q}{q}+1}}.
\]

Summing up $J_{1}$ and $J_{2}$, for all $p\in \left(  1,\infty \right)  $ we get%
\begin{equation}
\left \Vert T_{b,\alpha}f_{2}\right \Vert _{L_{q}\left(  B\right)  }%
\lesssim \Vert b\Vert_{\ast}\,r^{\frac{Q}{q}}\int \limits_{2r}^{\infty}\left(
1+\ln \frac{\tau}{r}\right)  \left \Vert f\right \Vert _{L_{p}\left(  B\left(
g,\tau \right)  \right)  }\frac{d\tau}{\tau^{\frac{Q}{q}+1}}.\label{6.3}%
\end{equation}
Finally, we have the following%
\[
\left \Vert T_{b,\alpha}f\right \Vert _{L_{q}\left(  B\right)  }\lesssim
\left \Vert b\right \Vert _{\ast}\left \Vert f\right \Vert _{L_{p}\left(
2B\right)  }+\Vert b\Vert_{\ast}\,r^{\frac{Q}{q}}\int \limits_{2r}^{\infty
}\left(  1+\ln \frac{\tau}{r}\right)  \left \Vert f\right \Vert _{L_{p}\left(
B\left(  g,\tau \right)  \right)  }\frac{d\tau}{\tau^{\frac{Q}{q}+1}}.
\]
On the other hand, we have%
\begin{align}
\left \Vert f\right \Vert _{L_{p}\left(  2B\right)  }  & \approx r^{\frac{Q}{q}%
}\left \Vert f\right \Vert _{L_{p}\left(  2B\right)  }\int \limits_{2r}^{\infty
}\frac{d\tau}{\tau^{\frac{Q}{q}+1}}\nonumber \\
& \leq r^{\frac{Q}{q}}\int \limits_{2r}^{\infty}\left \Vert f\right \Vert
_{L_{p}\left(  B\left(  g,\tau \right)  \right)  }\frac{d\tau}{\tau^{\frac
{Q}{q}+1}},\label{e313}%
\end{align}
which completes the proof of Lemma \ref{Lemma 5} by (\ref{e313}).
\end{proof}

Secondly, for the proof of Adams type results, we need some lemmas and
theorems about the estimates of sublinear commutator of fractional maximal
operator in generalized Morrey spaces on Heisenberg groups.

\begin{lemma}
\label{Lemma 1}Let $1<p<\infty$, $0\leq \alpha<\frac{Q}{p}$, $\frac{1}{q}%
=\frac{1}{p}-\frac{\alpha}{Q}$, $b\in BMO\left(  H_{n}\right)  $. Then the
inequality%
\[
\Vert M_{b,\alpha}f\Vert_{L_{q}(B(g,r))}\lesssim \Vert b\Vert_{\ast}%
\,r^{\frac{Q}{q}}\sup_{\tau>2r}\left(  1+\ln \frac{\tau}{r}\right)
\tau^{-\frac{Q}{q}}\Vert f\Vert_{L_{p}(B(g,\tau))}%
\]
holds for any ball $B(g,r)$ and for all $f\in L_{p}^{loc}(H_{n})$.
\end{lemma}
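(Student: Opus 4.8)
The plan is to run the same Calderón–Zygmund-type decomposition as in Lemma \ref{Lemma 5}, but to exploit the supremal (rather than integral) structure of the maximal operator. For a fixed ball $B=B(g,r)$ I would write $f=f_1+f_2$ with $f_1=f\chi_{2B}$ and $f_2=f\chi_{(2B)^{C}}$, and use the sublinearity of $M_{b,\alpha}$ to split $\Vert M_{b,\alpha}f\Vert_{L_q(B)}\le\Vert M_{b,\alpha}f_1\Vert_{L_q(B)}+\Vert M_{b,\alpha}f_2\Vert_{L_q(B)}$. For the local piece I would invoke the known $L_p(H_n)\to L_q(H_n)$ boundedness of the fractional maximal commutator for $b\in BMO(H_n)$ (with $\tfrac1q=\tfrac1p-\tfrac{\alpha}{Q}$), giving $\Vert M_{b,\alpha}f_1\Vert_{L_q(B)}\lesssim\Vert b\Vert_{\ast}\Vert f\Vert_{L_p(2B)}$; this is absorbed into the right-hand side at the very end, since letting $\tau\to(2r)^{+}$ in the supremum yields $\Vert f\Vert_{L_p(2B)}\lesssim r^{\frac{Q}{q}}\sup_{\tau>2r}(1+\ln\frac{\tau}{r})\tau^{-\frac{Q}{q}}\Vert f\Vert_{L_p(B(g,\tau))}$.

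The heart of the argument is a pointwise bound on $M_{b,\alpha}f_2(g_{\ast})$ for $g_{\ast}\in B$. First I would observe that, since $f_2$ is supported in $(2B)^{C}$, the inner averages over $B(g_{\ast},t)$ can be nonzero only for $t>r$, and for such $t$ one has $B(g_{\ast},t)\subset B(g,2t)$ together with $|B(g_{\ast},t)|^{\frac{\alpha}{Q}-1}\approx(2t)^{\alpha-Q}$. Setting $\tau=2t>2r$ and splitting $|b(g_{\ast})-b(h)|\le|b(g_{\ast})-b_{B(g,\tau)}|+|b_{B(g,\tau)}-b(h)|$, I would estimate each resulting integral by Hölder's inequality with exponents $p$ and $p'$; the factor $\Vert b(\cdot)-b_{B(g,\tau)}\Vert_{L_{p'}(B(g,\tau))}\lesssim\Vert b\Vert_{\ast}\tau^{Q/p'}$ comes from the John–Nirenberg estimate (\ref{5.1}), and the exponent bookkeeping $\alpha-Q+\tfrac{Q}{p'}=-\tfrac{Q}{q}$ produces the weight $\tau^{-\frac{Q}{q}}$.

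To turn the remaining $|b(g_{\ast})-b_{B(g,\tau)}|$ into something integrable in $g_{\ast}$, I would peel off $|b(g_{\ast})-b_{B(g,\tau)}|\le|b(g_{\ast})-b_{B(g,r)}|+|b_{B(g,r)}-b_{B(g,\tau)}|$ and control the difference of averages by the logarithmic estimate (\ref{5.2}), namely $|b_{B(g,r)}-b_{B(g,\tau)}|\lesssim\Vert b\Vert_{\ast}\ln\frac{\tau}{r}$ for $2r<\tau$; this is exactly where the factor $(1+\ln\frac{\tau}{r})$ is born. Collecting terms and taking the supremum over $\tau>2r$ gives $M_{b,\alpha}f_2(g_{\ast})\lesssim\bigl(|b(g_{\ast})-b_{B(g,r)}|+\Vert b\Vert_{\ast}\bigr)A$, where $A=\sup_{\tau>2r}(1+\ln\frac{\tau}{r})\tau^{-\frac{Q}{q}}\Vert f\Vert_{L_p(B(g,\tau))}$. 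Finally I would take the $L_q(B)$ norm in $g_{\ast}$, using (\ref{5.1}) once more to get $\Vert b(\cdot)-b_{B(g,r)}\Vert_{L_q(B)}\lesssim\Vert b\Vert_{\ast}r^{Q/q}$ together with $\Vert 1\Vert_{L_q(B)}\approx r^{Q/q}$, which produces $\Vert M_{b,\alpha}f_2\Vert_{L_q(B)}\lesssim\Vert b\Vert_{\ast}r^{\frac{Q}{q}}A$, precisely the claimed bound.

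The main obstacle, and the only genuinely delicate point, is the treatment of the BMO factor $|b(g_{\ast})-b(h)|$ inside the maximal average: one cannot simply pull a constant out of the sup, so the two-step centering through $b_{B(g,\tau)}$ and then $b_{B(g,r)}$, combined with the logarithmic growth (\ref{5.2}) of the averages across scales, is essential both to generate the correct $(1+\ln\frac{\tau}{r})$ weight and to leave behind a function of $g_{\ast}$ whose $L_q(B)$ norm is again controlled by $\Vert b\Vert_{\ast}r^{Q/q}$. Everything else is routine Hölder estimation and exponent arithmetic, and the replacement of the integral form of Lemma \ref{Lemma 5} by the supremal form in Lemma \ref{Lemma 1} is exactly what the maximal structure buys us.
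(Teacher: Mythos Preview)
Your proposal is correct and follows essentially the same strategy as the paper: the decomposition $f=f_1+f_2$, the $L_p\to L_q$ boundedness for the local part, the geometric observation that only radii $t>r$ contribute and $B(g_\ast,t)\subset B(g,2t)$, the two-step BMO centering combined with (\ref{5.1}) and (\ref{5.2}) to produce the $(1+\ln\frac{\tau}{r})$ factor, and the absorption of $\Vert f\Vert_{L_p(2B)}$ into the supremal right-hand side are all exactly what the paper does. The only cosmetic difference is the order of the centerings: the paper first splits through $b_{B(g,r)}$ (giving its $J_1,J_2$) and then recenters $J_1$ through $b_{B(g,\tau)}$, whereas you center through $b_{B(g,\tau)}$ first and then pass to $b_{B(g,r)}$; the estimates and the outcome are identical.
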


\begin{proof}
Let $1<p<\infty$, $0\leq \alpha<\frac{Q}{p}$ and $\frac{1}{q}=\frac{1}{p}%
-\frac{\alpha}{Q}$. For an arbitrary ball $B=B\left(  g,r\right)  $ we set
$f=f_{1}+f_{2}$, where \ $f_{1}=f\chi_{2B}$, \ $f_{2}=f\chi_{\left(
2B\right)  ^{C}}$ and $2B=B\left(  g,2r\right)  $. Hence,%
\[
\left \Vert M_{b,\alpha}f\right \Vert _{L_{q}\left(  B\right)  }\leq \left \Vert
M_{b,\alpha}f_{1}\right \Vert _{L_{q}\left(  B\right)  }+\left \Vert
M_{b,\alpha}f_{2}\right \Vert _{L_{q}\left(  B\right)  }.
\]
From the boundedness of $M_{b,\alpha}$ from $L_{p}(H_{n})$ to $L_{q}(H_{n})$
(see, for example, \cite{Alphonse, Folland-Stein, Stromberg}) it follows that:%
\begin{align*}
\left \Vert M_{b,\alpha}f_{1}\right \Vert _{L_{q}\left(  B\right)  }  &
\leq \left \Vert M_{b,\alpha}f_{1}\right \Vert _{L_{q}\left(  H_{n}\right)  }\\
& \lesssim \left \Vert b\right \Vert _{\ast}\left \Vert f_{1}\right \Vert
_{L_{p}\left(  H_{n}\right)  }=\left \Vert b\right \Vert _{\ast}\left \Vert
f\right \Vert _{L_{p}\left(  2B\right)  }.
\end{align*}

Let $h$ be an arbitrary point in $B$. If $B\left(  h,\tau \right)  \cap \left(
2B\right)  ^{C}\neq \emptyset$, then $\tau>r$. Indeed, if $w\in B\left(
h,\tau \right)  \cap \left(  2B\right)  ^{C}$, then $\tau>\left \vert
h^{-1}w\right \vert \geq \left \vert g^{-1}w\right \vert -\left \vert
g^{-1}h\right \vert >2r-r=r$. On the other hand, $B\left(  h,\tau \right)
\cap \left(  2B\right)  ^{C}\subset B\left(  g,2\tau \right)  $. Indeed, for
$w\in B\left(  h,\tau \right)  \cap \left(  2B\right)  ^{C}$ we have $\left \vert
g^{-1}w\right \vert \leq \left \vert h^{-1}w\right \vert +\left \vert
g^{-1}h\right \vert <\tau+r<2\tau$. Hence,%
\begin{align*}
M_{b,\alpha}f_{2}\left(  h\right)   & =\sup_{\tau>0}\frac{1}{\left \vert
B(h,\tau)\right \vert ^{1-\frac{\alpha}{Q}}}\int \limits_{B\left(
h,\tau \right)  \cap \left(  2B\right)  ^{C}}\left \vert b\left(  w\right)
-b\left(  h\right)  \right \vert \left \vert f\left(  w\right)  \right \vert dw\\
& \leq2^{Q-\alpha}\sup_{\tau>r}\frac{1}{\left \vert B(g,2\tau)\right \vert
^{1-\frac{\alpha}{Q}}}\int \limits_{B\left(  g,2\tau \right)  }\left \vert
b\left(  w\right)  -b\left(  h\right)  \right \vert \left \vert f\left(
w\right)  \right \vert dw\\
& =2^{Q-\alpha}\sup_{\tau>2r}\frac{1}{\left \vert B(g,\tau)\right \vert
^{1-\frac{\alpha}{Q}}}\int \limits_{B\left(  g,\tau \right)  }\left \vert
b\left(  w\right)  -b\left(  h\right)  \right \vert \left \vert f\left(
w\right)  \right \vert dw.
\end{align*}
Therefore, for all $h\in B$ we have%
\begin{equation}
M_{b,\alpha}f_{2}\left(  h\right)  \leq2^{Q-\alpha}\sup_{\tau>2r}\frac
{1}{\left \vert B(g,\tau)\right \vert ^{1-\frac{\alpha}{Q}}}\int
\limits_{B\left(  g,\tau \right)  }\left \vert b\left(  w\right)  -b\left(
h\right)  \right \vert \left \vert f\left(  w\right)  \right \vert dw.\label{63}%
\end{equation}

Then%
\begin{align*}
\left \Vert M_{b,\alpha}f_{2}\right \Vert _{L_{q}\left(  B\right)  }  &
\lesssim \left(  \int \limits_{B}\left(  \sup_{\tau>2r}\frac{1}{\left \vert
B(g,\tau)\right \vert ^{1-\frac{\alpha}{Q}}}\int \limits_{B\left(
g,\tau \right)  }\left \vert b\left(  w\right)  -b\left(  h\right)  \right \vert
\left \vert f\left(  w\right)  \right \vert dw\right)  ^{q}dg\right)  ^{\frac
{1}{q}}\\
& \leq \left(  \int \limits_{B}\left(  \sup_{\tau>2r}\frac{1}{\left \vert
B(g,\tau)\right \vert ^{1-\frac{\alpha}{Q}}}\int \limits_{B\left(
g,\tau \right)  }\left \vert b\left(  w\right)  -b_{B}\right \vert \left \vert
f\left(  w\right)  \right \vert dw\right)  ^{q}dg\right)  ^{\frac{1}{q}}\\
& +\left(  \int \limits_{B}\left(  \sup_{\tau>2r}\frac{1}{\left \vert
B(g,\tau)\right \vert ^{1-\frac{\alpha}{Q}}}\int \limits_{B\left(
g,\tau \right)  }\left \vert b\left(  h\right)  -b_{B}\right \vert \left \vert
f\left(  w\right)  \right \vert dw\right)  ^{q}dg\right)  ^{\frac{1}{q}}\\
& =J_{1}+J_{2}.
\end{align*}
Let us estimate $J_{1}$.%
\begin{align*}
J_{1}  & =r^{\frac{Q}{q}}\sup_{\tau>2r}\frac{1}{\left \vert B(g,\tau
)\right \vert ^{1-\frac{\alpha}{Q}}}\int \limits_{B\left(  g,\tau \right)
}\left \vert b\left(  w\right)  -b_{B}\right \vert \left \vert f\left(  w\right)
\right \vert dw\\
& \approx r^{\frac{Q}{q}}\sup_{\tau>2r}\tau^{\alpha-Q}\int \limits_{B\left(
g,\tau \right)  }\left \vert b\left(  w\right)  -b_{B}\right \vert \left \vert
f\left(  w\right)  \right \vert dw.
\end{align*}

Applying the H\"{o}lder's inequality, by (\ref{5.1}), (\ref{5.2}) and
$\frac{1}{\mu}+\frac{1}{p}=1$ we get%
\begin{align*}
J_{1}  & \lesssim r^{\frac{Q}{q}}\sup_{\tau>2r}\tau^{\alpha-Q}\int
\limits_{B\left(  g,\tau \right)  }\left \vert b\left(  w\right)  -b_{B\left(
g,\tau \right)  }\right \vert \left \vert f\left(  w\right)  \right \vert dw\\
& +r^{\frac{Q}{q}}\sup_{\tau>2r}\tau^{\alpha-Q}\left \vert b_{B\left(
g,r\right)  }-b_{B\left(  g,\tau \right)  }\right \vert \int \limits_{B\left(
g,\tau \right)  }\left \vert f\left(  w\right)  \right \vert dw\\
& \lesssim r^{\frac{Q}{q}}\sup_{\tau>2r}\tau^{\alpha-\frac{Q}{p}}\left \Vert
\left(  b\left(  \cdot \right)  -b_{B\left(  g,\tau \right)  }\right)
\right \Vert _{L_{\mu}\left(  B\left(  g,\tau \right)  \right)  }\left \Vert
f\right \Vert _{L_{p}\left(  B\left(  g,\tau \right)  \right)  }\\
& +r^{\frac{Q}{q}}\sup_{\tau>2r}t^{\alpha-Q}\left \vert b_{B\left(  g,r\right)
}-b_{B\left(  g,\tau \right)  }\right \vert \left \Vert f\right \Vert
_{L_{p}\left(  B\left(  g,\tau \right)  \right)  }\left \vert B\left(
g,\tau \right)  \right \vert ^{1-\frac{1}{p}}\\
& \lesssim \Vert b\Vert_{\ast}\,r^{\frac{Q}{q}}\sup_{\tau>2r}\left(  1+\ln
\frac{\tau}{r}\right)  t^{-\frac{Q}{q}}\Vert f\Vert_{L_{p}(B(g,\tau))}.
\end{align*}
In order to estimate $J_{2}$ note that%
\[
J_{2}=\left \Vert \left(  b\left(  \cdot \right)  -b_{B\left(  g,\tau \right)
}\right)  \right \Vert _{L_{q}\left(  B\left(  g,\tau \right)  \right)  }%
\sup_{\tau>2r}t^{\alpha-Q}\int \limits_{B\left(  g,\tau \right)  }\left \vert
f\left(  w\right)  \right \vert dw.
\]

By (\ref{5.1}), we get%
\[
J_{2}\lesssim \Vert b\Vert_{\ast}\,r^{\frac{Q}{q}}\sup_{\tau>2r}t^{\alpha
-Q}\int \limits_{B\left(  g,\tau \right)  }\left \vert f\left(  w\right)
\right \vert dw.
\]

Thus, by (\ref{e310})%
\[
J_{2}\lesssim \Vert b\Vert_{\ast}\,r^{\frac{Q}{q}}\sup_{\tau>2r}t^{-\frac{Q}%
{q}}\left \Vert f\right \Vert _{L_{p}\left(  B\left(  g,\tau \right)  \right)  }.
\]

Summing up $J_{1}$ and $J_{2}$, for all $p\in \left(  1,\infty \right)  $ we get%
\begin{equation}
\left \Vert M_{b,\alpha}f_{2}\right \Vert _{L_{q}\left(  B\right)  }%
\lesssim \Vert b\Vert_{\ast}\,r^{\frac{Q}{q}}\sup_{\tau>2r}t^{-\frac{Q}{q}%
}\left(  1+\ln \frac{\tau}{r}\right)  \left \Vert f\right \Vert _{L_{p}\left(
B\left(  g,\tau \right)  \right)  }.\label{64}%
\end{equation}

Finally, we have the following%
\begin{align*}
\left \Vert M_{b,\alpha}f\right \Vert _{L_{q}\left(  B\right)  }  &
\lesssim \left \Vert b\right \Vert _{\ast}\left \Vert f\right \Vert _{L_{p}\left(
2B\right)  }+\Vert b\Vert_{\ast}\,r^{\frac{Q}{q}}\sup_{\tau>2r}t^{-\frac{Q}%
{q}}\left(  1+\ln \frac{\tau}{r}\right)  \left \Vert f\right \Vert _{L_{p}\left(
B\left(  g,\tau \right)  \right)  }\\
& \lesssim \Vert b\Vert_{\ast}\,r^{\frac{Q}{q}}\sup_{\tau>2r}t^{-\frac{Q}{q}%
}\left(  1+\ln \frac{\tau}{r}\right)  \left \Vert f\right \Vert _{L_{p}\left(
B\left(  g,\tau \right)  \right)  },
\end{align*}
which completes the proof.
\end{proof}

Similarly to Lemma \ref{Lemma 1} the following lemma can also be proved.

\begin{lemma}
Let $1<p<\infty$, $b\in BMO\left(  H_{n}\right)  $ and $M_{b}$ is bounded on
$L_{p}(H_{n})$. Then the inequality%
\[
\Vert M_{b}f\Vert_{L_{p}(B(g,r))}\lesssim \Vert b\Vert_{\ast}\,r^{\frac{Q}{q}%
}\sup_{\tau>2r}\left(  1+\ln \frac{\tau}{r}\right)  \tau^{-\frac{Q}{p}}\Vert
f\Vert_{L_{p}(B(g,\tau))}%
\]
holds for any ball $B(g,r)$ and for all $f\in L_{p}^{loc}(H_{n})$.
\end{lemma}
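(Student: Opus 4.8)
The plan is to follow the proof of Lemma~\ref{Lemma 1} almost verbatim, specializing to $\alpha=0$. Then $M_{b,0}=M_b$ is the commutator of the Hardy--Littlewood maximal operator, and the relation $\frac{1}{q}=\frac{1}{p}-\frac{\alpha}{Q}$ forces $q=p$, so the $r^{\frac{Q}{q}}$ and $\tau^{-\frac{Q}{p}}$ appearing on the right-hand side are consistent (both exponents equal $\frac{Q}{p}$). First I would fix a ball $B=B(g,r)$ and split $f=f_{1}+f_{2}$ with $f_{1}=f\chi_{2B}$, $f_{2}=f\chi_{(2B)^{C}}$, $2B=B(g,2r)$, so that $\Vert M_{b}f\Vert_{L_{p}(B)}\le\Vert M_{b}f_{1}\Vert_{L_{p}(B)}+\Vert M_{b}f_{2}\Vert_{L_{p}(B)}$. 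The local term is handled at once by the assumed $L_{p}(H_{n})$-boundedness of $M_{b}$: $\Vert M_{b}f_{1}\Vert_{L_{p}(B)}\le\Vert M_{b}f_{1}\Vert_{L_{p}(H_{n})}\lesssim\Vert b\Vert_{\ast}\Vert f_{1}\Vert_{L_{p}(H_{n})}=\Vert b\Vert_{\ast}\Vert f\Vert_{L_{p}(2B)}$.

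For the tail I would reuse the geometric step behind (\ref{63}): for $h\in B$, whenever $B(h,\tau)\cap(2B)^{C}\neq\emptyset$ one has $\tau>r$ and $B(h,\tau)\cap(2B)^{C}\subset B(g,2\tau)$ by the triangular inequality for the Heisenberg distance. This yields, uniformly in $h\in B$, the $\alpha=0$ version of (\ref{63}),
\[
M_{b}f_{2}(h)\le 2^{Q}\sup_{\tau>2r}\frac{1}{|B(g,\tau)|}\int\limits_{B(g,\tau)}|b(w)-b(h)|\,|f(w)|\,dw.
\]
Writing $|b(w)-b(h)|\le|b(w)-b_{B}|+|b(h)-b_{B}|$ and taking the $L_{p}(B)$ norm splits $\Vert M_{b}f_{2}\Vert_{L_{p}(B)}$ into two pieces $J_{1}$ and $J_{2}$, exactly as in Lemma~\ref{Lemma 1}.

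For $J_{1}$ I would use $|B(g,\tau)|^{-1}\approx\tau^{-Q}$, pass from $b_{B}$ to $b_{B(g,\tau)}$ picking up $|b_{B(g,r)}-b_{B(g,\tau)}|\lesssim\Vert b\Vert_{\ast}(1+\ln\frac{\tau}{r})$ from (\ref{5.2}), and apply H\"{o}lder's inequality with $\frac{1}{\mu}+\frac{1}{p}=1$ together with (\ref{5.1}), giving $\Vert b-b_{B(g,\tau)}\Vert_{L_{\mu}(B(g,\tau))}\lesssim\Vert b\Vert_{\ast}\tau^{Q/\mu}$; since $\tau^{-Q}\tau^{Q/\mu}=\tau^{-Q/p}$, this produces $J_{1}\lesssim\Vert b\Vert_{\ast}r^{Q/p}\sup_{\tau>2r}(1+\ln\frac{\tau}{r})\tau^{-Q/p}\Vert f\Vert_{L_{p}(B(g,\tau))}$. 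For $J_{2}$ the factor $|b(h)-b_{B}|$, integrated in $h$ over $B$, contributes $\Vert b-b_{B}\Vert_{L_{p}(B)}\lesssim\Vert b\Vert_{\ast}r^{Q/p}$ by (\ref{5.1}), while the remaining average of $f$ is controlled through H\"{o}lder and (\ref{e310}) by $\sup_{\tau>2r}\tau^{-Q/p}\Vert f\Vert_{L_{p}(B(g,\tau))}$; thus $J_{2}$ is dominated by the same right-hand side (the logarithmic factor being $\ge1$). Together these give the $\alpha=0$ analogue of (\ref{64}).

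Finally, to absorb the local term I would observe that the supremal counterpart of (\ref{e313}) holds, namely $\Vert f\Vert_{L_{p}(2B)}\lesssim r^{Q/p}\sup_{\tau>2r}(1+\ln\frac{\tau}{r})\tau^{-Q/p}\Vert f\Vert_{L_{p}(B(g,\tau))}$, which follows simply by evaluating the supremum at $\tau=2r$ (so $r^{Q/p}(2r)^{-Q/p}$ is a constant and $\Vert f\Vert_{L_{p}(B(g,2r))}=\Vert f\Vert_{L_{p}(2B)}$). Adding this to the bound for $\Vert M_{b}f_{2}\Vert_{L_{p}(B)}$ yields the claimed inequality. The one step that genuinely needs care is the uniform-in-$h$ pointwise estimate for $M_{b}f_{2}(h)$, since $M_{b}$ is defined by a supremum over all radii; but this is precisely the inclusion argument already used for Lemma~\ref{Lemma 1}, and everything else is a routine transcription of that proof with $\alpha=0$ and $q=p$.
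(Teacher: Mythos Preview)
Your proposal is correct and matches the paper's approach exactly: the paper gives no separate proof for this lemma, merely stating that it ``can also be proved'' similarly to Lemma~\ref{Lemma 1}, and your argument is precisely the $\alpha=0$, $q=p$ specialization of that proof (you also correctly identify that the $r^{Q/q}$ in the statement should be read as $r^{Q/p}$). Your write-up is in fact more careful than the paper's, e.g.\ in explaining how the local term $\Vert f\Vert_{L_{p}(2B)}$ is absorbed into the supremum; the only cosmetic point is that the supremum is over $\tau>2r$, so instead of ``evaluating at $\tau=2r$'' one should take any fixed $\tau\in(2r,4r)$ and use the monotonicity of $\Vert f\Vert_{L_{p}(B(g,\tau))}$.
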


The following theorem is true.

\begin{theorem}
\label{teo10}Let $1<p<\infty$, $0\leq \alpha<\frac{Q}{p}$, $\frac{1}{q}%
=\frac{1}{p}-\frac{\alpha}{Q}$, $b\in BMO\left(  H_{n}\right)  $ and let
$\left(  \varphi_{1},\varphi_{2}\right)  $ satisfies the condition%
\[
\sup_{r<t<\infty}t^{\alpha-\frac{Q}{p}}\left(  1+\ln \frac{t}{r}\right)
\operatorname*{essinf}\limits_{t<\tau<\infty}\varphi_{1}\left(  g,\tau \right)
\tau^{\frac{Q}{p}}\leq C\varphi_{2}\left(  g,r\right)  ,
\]
where $C$ does not depend on $g$ and $r$. Then the operator $M_{b,\alpha}$ is
bounded from $M_{p,\varphi_{1}}\left(  H_{n}\right)  $ to $M_{q,\varphi_{2}%
}\left(  H_{n}\right)  $. Moreover%
\[
\left \Vert M_{b,\alpha}f\right \Vert _{M_{q,\varphi_{2}}}\lesssim \Vert
b\Vert_{\ast}\left \Vert f\right \Vert _{M_{p,\varphi_{1}}}.
\]

\end{theorem}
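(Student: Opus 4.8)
The plan is to pass from the local (ball-by-ball) estimate supplied by Lemma~\ref{Lemma 1} to the global generalized Morrey bound, exactly in the spirit of Guliyev's method, the only nontrivial ingredient being an \emph{essinf trick} that lets the hypothesis be stated without any monotonicity of $\varphi_1$ in $r$. I start from the definition
\[
\left\Vert M_{b,\alpha}f\right\Vert_{M_{q,\varphi_2}}=\sup_{g\in H_n,\,r>0}\varphi_2(g,r)^{-1}\,|B(g,r)|^{-\frac{1}{q}}\,\left\Vert M_{b,\alpha}f\right\Vert_{L_q(B(g,r))},
\]
and apply Lemma~\ref{Lemma 1} to the inner $L_q$-norm, which gives
\[
\left\Vert M_{b,\alpha}f\right\Vert_{L_q(B(g,r))}\lesssim\Vert b\Vert_{\ast}\,r^{\frac{Q}{q}}\sup_{\tau>2r}\left(1+\ln\frac{\tau}{r}\right)\tau^{-\frac{Q}{q}}\Vert f\Vert_{L_p(B(g,\tau))}.
\]
It then suffices to control the supremum on the right by $\varphi_2(g,r)\Vert f\Vert_{M_{p,\varphi_1}}$.

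The key step, and the one I expect to be the only real obstacle, is to estimate $\Vert f\Vert_{L_p(B(g,\tau))}$ by the essential infimum appearing in the hypothesis. Here I would exploit that $\tau\mapsto\Vert f\Vert_{L_p(B(g,\tau))}$ is nondecreasing: for every $s>\tau$ one has $B(g,\tau)\subset B(g,s)$, hence $\Vert f\Vert_{L_p(B(g,\tau))}\le\Vert f\Vert_{L_p(B(g,s))}$, and by the definition of the generalized Morrey quasinorm $\Vert f\Vert_{L_p(B(g,s))}\le\varphi_1(g,s)|B(g,s)|^{\frac{1}{p}}\Vert f\Vert_{M_{p,\varphi_1}}\approx\varphi_1(g,s)\,s^{\frac{Q}{p}}\Vert f\Vert_{M_{p,\varphi_1}}$. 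Since the left-hand side does not depend on $s$, taking the essential infimum over $s>\tau$ yields
\[
\Vert f\Vert_{L_p(B(g,\tau))}\lesssim\Vert f\Vert_{M_{p,\varphi_1}}\operatorname*{essinf}\limits_{\tau<s<\infty}\varphi_1(g,s)\,s^{\frac{Q}{p}}.
\]
This is precisely the device that dispenses with any monotonicity assumption on $\varphi_1$.

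Substituting this back and using the exponent identity $\frac{1}{q}=\frac{1}{p}-\frac{\alpha}{Q}$, which rewrites $\tau^{-\frac{Q}{q}}=\tau^{\alpha-\frac{Q}{p}}$, I would obtain
\[
\left\Vert M_{b,\alpha}f\right\Vert_{L_q(B(g,r))}\lesssim\Vert b\Vert_{\ast}\Vert f\Vert_{M_{p,\varphi_1}}\,r^{\frac{Q}{q}}\sup_{\tau>2r}\left(1+\ln\frac{\tau}{r}\right)\tau^{\alpha-\frac{Q}{p}}\operatorname*{essinf}\limits_{\tau<s<\infty}\varphi_1(g,s)\,s^{\frac{Q}{p}}.
\]
Because $\{\tau:\tau>2r\}\subset\{t:t>r\}$, the supremum is dominated by the one in the hypothesis, so the assumed condition bounds it by $C\varphi_2(g,r)$, leaving
\[
\left\Vert M_{b,\alpha}f\right\Vert_{L_q(B(g,r))}\lesssim\Vert b\Vert_{\ast}\Vert f\Vert_{M_{p,\varphi_1}}\,r^{\frac{Q}{q}}\varphi_2(g,r).
\]
Finally I would divide by $\varphi_2(g,r)|B(g,r)|^{\frac{1}{q}}\approx\varphi_2(g,r)\,r^{\frac{Q}{q}}$ and take the supremum over all $g$ and $r$, producing $\left\Vert M_{b,\alpha}f\right\Vert_{M_{q,\varphi_2}}\lesssim\Vert b\Vert_{\ast}\Vert f\Vert_{M_{p,\varphi_1}}$ and completing the argument. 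The remaining manipulations are routine; the entire content of the theorem lies in Lemma~\ref{Lemma 1} together with the monotonicity/essinf passage above.
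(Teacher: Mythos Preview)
Your argument is correct and matches the paper's approach. The paper's proof is a two-line sketch that invokes Lemma~\ref{Lemma 1} and then cites ``Theorem~4.1 in \cite{Guliyev}'' (a supremal Hardy-type inequality) to pass from the local estimate to the Morrey bound; the essinf/monotonicity passage you spell out explicitly is exactly what that cited theorem encodes, and indeed the same device appears verbatim in the paper's proof of Theorem~\ref{teo15} (see the chain leading to \eqref{10}).
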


\begin{proof}
By Theorem 4.1 in \cite{Guliyev} and Lemma \ref{Lemma 1}, we get%
\begin{align*}
\left \Vert M_{b,\alpha}f\right \Vert _{M_{q,\varphi_{2}}}  & \lesssim \Vert
b\Vert_{\ast}\sup_{g\in H_{n},r>0}\varphi_{2}\left(  g,r\right)  ^{-1}%
\sup_{\tau>r}\left(  1+\ln \frac{\tau}{r}\right)  \tau^{-\frac{Q}{q}}\Vert
f\Vert_{L_{p}(B(g,\tau))}\\
& \lesssim \Vert b\Vert_{\ast}\sup_{g\in H_{n},r>0}\varphi_{1}\left(
g,r\right)  ^{-1}r^{-\frac{Q}{p}}\Vert f\Vert_{L_{p}(B(g,r))}=\Vert
b\Vert_{\ast}\left \Vert f\right \Vert _{M_{p,\varphi_{1}}}.
\end{align*}

\end{proof}

In the case of $\alpha=0$ and $p=q$, we get the following corollary by Theorem
\ref{teo10}.

\begin{corollary}
\label{corollary2}Let $1<p<\infty$, $b\in BMO\left(  H_{n}\right)  $ and
$\left(  \varphi_{1},\varphi_{2}\right)  $ satisfies the condition%
\[
\sup_{r<t<\infty}t^{-\frac{Q}{p}}\left(  1+\ln \frac{t}{r}\right)
\operatorname*{essinf}\limits_{t<\tau<\infty}\varphi_{1}\left(  g,\tau \right)
\tau^{\frac{Q}{p}}\leq C\varphi_{2}\left(  g,r\right)  ,
\]
where $C$ does not depend on $g$ and $r$. Then the operator $M_{b}$ is bounded
from $M_{p,\varphi_{1}}\left(  H_{n}\right)  $ to $M_{p,\varphi_{2}}\left(
H_{n}\right)  $. Moreover%
\[
\left \Vert M_{b}f\right \Vert _{M_{p,\varphi_{2}}}\lesssim \Vert b\Vert_{\ast
}\left \Vert f\right \Vert _{M_{p,\varphi_{1}}}.
\]

\end{corollary}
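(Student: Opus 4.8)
The plan is to obtain Corollary \ref{corollary2} as the endpoint case $\alpha=0$ of Theorem \ref{teo10}, so that no new analytic estimate is required. First I would record the two structural consequences of setting $\alpha=0$. The relation $\frac{1}{q}=\frac{1}{p}-\frac{\alpha}{Q}$ forces $q=p$, and the fractional maximal commutator degenerates to the maximal commutator: directly from its definition,
\[
M_{b,0}(f)(g)=\sup_{r>0}|B(g,r)|^{-1}\int \limits_{B(g,r)}|b(g)-b(h)|\,|f(h)|\,dh=M_{b}(f)(g).
\]
Thus $M_{b,0}=M_{b}$, which is exactly the operator appearing in the corollary.

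Next I would check that the hypothesis of Theorem \ref{teo10} specializes to the hypothesis of the corollary. With $\alpha=0$ the factor $t^{\alpha-\frac{Q}{p}}$ in the supremal-type condition of Theorem \ref{teo10} collapses to $t^{-\frac{Q}{p}}$, leaving
\[
\sup_{r<t<\infty}t^{-\frac{Q}{p}}\left(1+\ln \frac{t}{r}\right)\operatorname*{essinf}\limits_{t<\tau<\infty}\varphi_{1}(g,\tau)\,\tau^{\frac{Q}{p}}\leq C\varphi_{2}(g,r),
\]
which is verbatim the condition imposed on $(\varphi_{1},\varphi_{2})$ in the corollary. The only point requiring a remark is that Theorem \ref{teo10} genuinely admits the endpoint $\alpha=0$: its proof invokes Lemma \ref{Lemma 1}, whose stated range is $0\leq \alpha<\frac{Q}{p}$ and hence includes $\alpha=0$, and the underlying $L_{p}\to L_{q}$ boundedness used there becomes, at $\alpha=0$, precisely the known $L_{p}(H_{n})$-boundedness of $M_{b}$ for $1<p<\infty$ (cf.\ \cite{Alphonse, Folland-Stein, Stromberg}). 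With these identifications in place, applying Theorem \ref{teo10} with $q=p$ yields at once that $M_{b}$ is bounded from $M_{p,\varphi_{1}}(H_{n})$ to $M_{p,\varphi_{2}}(H_{n})$ together with the quantitative estimate $\left \Vert M_{b}f\right \Vert _{M_{p,\varphi_{2}}}\lesssim \Vert b\Vert_{\ast}\left \Vert f\right \Vert _{M_{p,\varphi_{1}}}$.

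I expect no genuine obstacle here: the substantive work has already been done in Lemma \ref{Lemma 1} and Theorem \ref{teo10}, and the corollary is a bookkeeping specialization. If a fully self-contained argument were preferred, one could instead rerun the short proof of Theorem \ref{teo10} with $q$ replaced throughout by $p$, feeding in the $\alpha=0$ instance of Lemma \ref{Lemma 1} and Theorem 4.1 of \cite{Guliyev}; but invoking Theorem \ref{teo10} directly is cleaner and avoids duplicating the estimates.
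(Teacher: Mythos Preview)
Your proposal is correct and matches the paper's approach exactly: the paper derives Corollary \ref{corollary2} simply by taking $\alpha=0$ (hence $q=p$) in Theorem \ref{teo10}, with no additional argument. Your extra remarks verifying that $M_{b,0}=M_{b}$ and that the supremal condition and the range $0\leq\alpha<\frac{Q}{p}$ in Lemma \ref{Lemma 1} accommodate this endpoint are a welcome elaboration of what the paper leaves implicit.
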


\section{Proofs of the main results}

\subsection{\textbf{Proof of Theorem \ref{teo15}.}}

\begin{proof}
To prove Theorem \ref{teo15}, we will use the following relationship between
essential supremum and essential infimum%
\begin{equation}
\left(  \operatorname*{essinf}\limits_{x\in E}f\left(  x\right)  \right)
^{-1}=\operatorname*{esssup}\limits_{x\in E}\frac{1}{f\left(  x\right)
},\label{5}%
\end{equation}
where $f$ is any real-valued nonnegative function and measurable on $E$ (see
\cite{Wheeden-Zygmund}, page 143). Indeed, since $f\in M_{p,\varphi_{1}}$, by
(\ref{5}) and the non-decreasing, with respect to $\tau$, of the norm
$\left \Vert f\right \Vert _{L_{p}\left(  B\left(  g,\tau \right)  \right)  }$,
we get%
\begin{align}
& \frac{\left \Vert f\right \Vert _{L_{p}\left(  B\left(  g,\tau \right)
\right)  }}{\operatorname*{essinf}\limits_{0<\tau<s<\infty}\varphi
_{1}(g,s)s^{\frac{Q}{p}}}\leq \operatorname*{esssup}\limits_{0<\tau<s<\infty
}\frac{\left \Vert f\right \Vert _{L_{p}\left(  B\left(  g,\tau \right)  \right)
}}{\varphi_{1}(g,s)s^{\frac{Q}{p}}}\nonumber \\
& \leq \operatorname*{esssup}\limits_{0<s<\infty}\frac{\left \Vert f\right \Vert
_{L_{p}\left(  B\left(  g,s\right)  \right)  }}{\varphi_{1}(g,s)s^{\frac{Q}%
{p}}}\leq \left \Vert f\right \Vert _{M_{p,\varphi_{1}}}.\label{10}%
\end{align}
For $1<p<\infty$, since $(\varphi_{1},\varphi_{2})$ satisfies (\ref{47}) and
by (\ref{10}), we have%
\begin{align}
& \int \limits_{r}^{\infty}\left(  1+\ln \frac{\tau}{r}\right)  \left \Vert
f\right \Vert _{L_{p}\left(  B\left(  g,\tau \right)  \right)  }\tau^{-\frac
{Q}{q}}\frac{d\tau}{\tau}\nonumber \\
& \leq \int \limits_{r}^{\infty}\left(  1+\ln \frac{\tau}{r}\right)
\frac{\left \Vert f\right \Vert _{L_{p}\left(  B\left(  g,\tau \right)  \right)
}}{\operatorname*{essinf}\limits_{\tau<s<\infty}\varphi_{1}(g,s)s^{\frac{Q}%
{p}}}\frac{\operatorname*{essinf}\limits_{\tau<s<\infty}\varphi_{1}%
(g,s)s^{\frac{Q}{p}}}{\tau^{\frac{Q}{q}}}\frac{d\tau}{\tau}\nonumber \\
& \leq C\left \Vert f\right \Vert _{M_{p,\varphi_{1}}}\int \limits_{r}^{\infty
}\left(  1+\ln \frac{\tau}{r}\right)  \frac{\operatorname*{essinf}%
\limits_{\tau<s<\infty}\varphi_{1}(g,s)s^{\frac{Q}{p}}}{\tau^{\frac{Q}{q}}%
}\frac{d\tau}{\tau}\nonumber \\
& \leq C\left \Vert f\right \Vert _{M_{p,\varphi_{1}}}\varphi_{2}%
(g,r).\label{41}%
\end{align}
Then by (\ref{40}) and (\ref{41}), we get%
\begin{align*}
\left \Vert T_{b,\alpha}f\right \Vert _{M_{q,\varphi_{2}}}  & =\sup_{g\in
H_{n}{,}r>0}\varphi_{2}\left(  g,r\right)  ^{-1}|B(g,r)|^{-\frac{1}{q}%
}\left \Vert T_{b,\alpha}f\right \Vert _{L_{q}\left(  B\left(  g,r\right)
\right)  }\\
& \lesssim \Vert b\Vert_{\ast}\sup_{g\in H_{n}{,}r>0}\varphi_{2}\left(
g,r\right)  ^{-1}\int \limits_{r}^{\infty}\left(  1+\ln \frac{\tau}{r}\right)
\left \Vert f\right \Vert _{L_{p}\left(  B\left(  g,\tau \right)  \right)  }%
\tau^{-\frac{Q}{q}}\frac{d\tau}{\tau}\\
& \lesssim \Vert b\Vert_{\ast}\left \Vert f\right \Vert _{M_{p,\varphi_{1}}}.
\end{align*}
This completes the proof of Theorem \ref{teo15}.
\end{proof}

\subsection{\textbf{Proof of Theorem \ref{teo100}.}}

\begin{proof}
Let $1<p<\infty$, $0<\alpha<\frac{Q}{p}$ and $\frac{1}{q}=\frac{1}{p}%
-\frac{\alpha}{Q}$, $p<q$ and $f\in M_{p,\varphi^{\frac{1}{p}}}$. For an
arbitrary ball $B=B\left(  g,r\right)  $ we set $f=f_{1}+f_{2}$, where
\ $f_{1}=f\chi_{2B}$, \ $f_{2}=f\chi_{\left(  2B\right)  ^{C}}$ and
$2B=B\left(  g,2r\right)  $. Then we have
\[
\left \Vert T_{b,\alpha}f\right \Vert _{L_{q}\left(  B\right)  }\leq \left \Vert
T_{b,\alpha}f_{1}\right \Vert _{L_{q}\left(  B\right)  }+\left \Vert
T_{b,\alpha}f_{2}\right \Vert _{L_{q}\left(  B\right)  }.
\]

For $g\in B$ we have%
\[
\left \vert T_{b,\alpha}f_{2}\left(  g\right)  \right \vert \lesssim
\int \limits_{\left(  2B\right)  ^{C}}\left \vert b\left(  h\right)  -b\left(
g\right)  \right \vert \frac{\left \vert f\left(  h\right)  \right \vert
}{\left \vert g^{-1}h\right \vert ^{Q-\alpha}}dh.
\]

Analogously to Section 2, for all $p\in \left(  1,\infty \right)  $ and $g\in B
$ we get%
\begin{equation}
\left \vert T_{b,\alpha}f_{2}\left(  x\right)  \right \vert \lesssim \Vert
b\Vert_{\ast}\, \int \limits_{r}^{\infty}\left(  1+\ln \frac{\tau}{r}\right)
\tau^{\alpha-\frac{Q}{p}-1}\left \Vert f\right \Vert _{L_{p}\left(  B\left(
g,\tau \right)  \right)  }d\tau.\label{75}%
\end{equation}

Then from conditions (\ref{74}), (\ref{100}) and inequality (\ref{75}) we get%
\begin{align}
\left \vert T_{b,\alpha}f\left(  g\right)  \right \vert  & \lesssim \Vert
b\Vert_{\ast}r^{\alpha}M_{b}f\left(  g\right)  +\Vert b\Vert_{\ast}%
\int \limits_{r}^{\infty}\left(  1+\ln \frac{\tau}{r}\right)  \tau^{\alpha
-\frac{Q}{p}-1}\left \Vert f\right \Vert _{L_{p}\left(  B\left(  g,\tau \right)
\right)  }d\tau \nonumber \\
& \leq \Vert b\Vert_{\ast}r^{\alpha}M_{b}f\left(  g\right)  +\Vert b\Vert
_{\ast}\left \Vert f\right \Vert _{M_{p,\varphi^{\frac{1}{p}}}}\int
\limits_{r}^{\infty}\left(  1+\ln \frac{\tau}{r}\right)  \tau^{\alpha}%
\varphi \left(  g,\tau \right)  ^{\frac{1}{p}}\frac{d\tau}{\tau}\nonumber \\
& \lesssim \Vert b\Vert_{\ast}r^{\alpha}M_{b}f\left(  g\right)  +\Vert
b\Vert_{\ast}r^{-\frac{\alpha p}{q-p}}\left \Vert f\right \Vert _{M_{p,\varphi
^{\frac{1}{p}}}}.\label{76}%
\end{align}

Hence choosing $r=\left(  \frac{\left \Vert f\right \Vert _{M_{p,\varphi
^{\frac{1}{p}}}}}{M_{b}f\left(  g\right)  }\right)  ^{\frac{q-p}{\alpha q}}$
for every $g\in H_{n}$, we have%
\[
\left \vert T_{b,\alpha}f\left(  g\right)  \right \vert \lesssim \Vert
b\Vert_{\ast}\left(  M_{b}f\left(  g\right)  \right)  ^{\frac{p}{q}}\left \Vert
f\right \Vert _{M_{p,\varphi^{\frac{1}{p}}}}^{1-\frac{p}{q}}.
\]

Consequently the statement of the theorem follows in view of the boundedness
of the commutator of the maximal operator $M_{b}$ in $M_{p,\varphi^{\frac
{1}{p}}}\left(  H_{n}\right)  $ provided by Corollary \ref{corollary2} in
virtue of condition (\ref{67}).

Therefore, we have%
\begin{align*}
\left \Vert T_{b,\alpha}f\right \Vert _{M_{q,\varphi^{\frac{1}{q}}}}  &
=\sup_{g\in H_{n},\tau>0}\varphi \left(  g,\tau \right)  ^{-\frac{1}{q}}%
\tau^{-\frac{Q}{q}}\left \Vert T_{b,\alpha}f\right \Vert _{L_{q}\left(  B\left(
g,\tau \right)  \right)  }\\
& \lesssim \Vert b\Vert_{\ast}\left \Vert f\right \Vert _{M_{p,\varphi^{\frac
{1}{p}}}}^{1-\frac{p}{q}}\sup_{g\in H_{n},\tau>0}\varphi \left(  g,\tau \right)
^{-\frac{1}{q}}\tau^{-\frac{Q}{q}}\left \Vert M_{b}f\right \Vert _{L_{p}\left(
B\left(  g,\tau \right)  \right)  }^{\frac{p}{q}}\\
& =\Vert b\Vert_{\ast}\left \Vert f\right \Vert _{M_{p,\varphi^{\frac{1}{p}}}%
}^{1-\frac{p}{q}}\left(  \sup_{g\in H_{n},\tau>0}\varphi \left(  g,\tau \right)
^{-\frac{1}{p}}\tau^{-\frac{Q}{p}}\left \Vert M_{b}f\right \Vert _{L_{p}\left(
B\left(  g,\tau \right)  \right)  }\right)  ^{\frac{p}{q}}\\
& =\Vert b\Vert_{\ast}\left \Vert f\right \Vert _{M_{p,\varphi^{\frac{1}{p}}}%
}^{1-\frac{p}{q}}\left \Vert M_{b}f\right \Vert _{M_{p,\varphi^{\frac{1}{p}}}%
}^{\frac{p}{q}}\\
& \lesssim \Vert b\Vert_{\ast}\left \Vert f\right \Vert _{M_{p,\varphi^{\frac
{1}{p}}}}.
\end{align*}

\end{proof}

\begin{remark}
In the case of $\varphi \left(  g,r\right)  =r^{\lambda-Q}$, $0<\lambda<Q$ from
Theorem \ref{teo100} we get the following Adams type result (\cite{Adams}) for
the commutators of fractional maximal and integral operators.
\end{remark}

\begin{corollary}
Let $0<\alpha<Q$, $1<p<\frac{Q}{\alpha}$, $0<\lambda<Q-\alpha p$, $\frac{1}%
{p}-\frac{1}{q}=\frac{\alpha}{Q-\lambda}$ and $b\in BMO\left(  H_{n}\right)
$. Then, the operators $M_{b,\alpha}$ and $[b,\overline{T}_{\alpha}]$ are
bounded from $L_{p,\lambda}\left(  H_{n}\right)  $ to $L_{q,\lambda}\left(
H_{n}\right)  $.
\end{corollary}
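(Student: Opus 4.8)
The plan is to derive this result purely as a specialization of Theorem~\ref{teo100}, since that theorem already provides the Adams-type boundedness from $M_{p,\varphi^{1/p}}$ to $M_{q,\varphi^{1/q}}$ for the commutator operators $M_{b,\alpha}$ and $[b,\overline{T}_{\alpha}]$. First I would record the correspondence between the power-type weight and the (generalized) Morrey space. Recall from the excerpt that $L_{p,\lambda}=M_{p,\varphi}\mid_{\varphi(g,r)=r^{(\lambda-Q)/p}}$. To match the form $M_{p,\varphi^{1/p}}$ appearing in Theorem~\ref{teo100}, I would set $\varphi(g,r)=r^{\lambda-Q}$ as in the preceding Remark, so that $\varphi^{1/p}(g,r)=r^{(\lambda-Q)/p}$ recovers $L_{p,\lambda}$ exactly, while $\varphi^{1/q}(g,r)=r^{(\lambda-Q)/q}$ recovers $L_{q,\lambda}$ (note the exponent $\lambda$ is the same in source and target space). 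This explains why the hypothesis $\tfrac{1}{p}-\tfrac{1}{q}=\tfrac{\alpha}{Q-\lambda}$ is exactly the homogeneity relation that will make the two Zygmund-type conditions scale correctly.

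The substantive work is to verify conditions (\ref{67}) and (\ref{74}) for $\varphi(g,r)=r^{\lambda-Q}$. For condition (\ref{67}), since $\varphi$ is a pure power of $r$ independent of $g$, the inner essential infimum becomes $\operatorname*{essinf}_{\tau<s<\infty} s^{\lambda-Q}s^{Q/p}=\operatorname*{essinf}_{\tau<s<\infty} s^{\lambda-Q+Q/p}$; using $0<\lambda<Q-\alpha p<Q$ and $p>1$ one checks the sign of the exponent $\lambda-Q+\tfrac{Q}{p}$ and evaluates the infimum at the endpoint $s=\tau$, reducing (\ref{67}) to an inequality of the shape $\sup_{r<\tau<\infty}\tau^{c}(1+\ln\tfrac{\tau}{r})^{p}\le C\,r^{\lambda-Q}$ for an explicit constant $c$; the logarithmic factor is absorbed against the strictly negative power of $\tau$, so the supremum is attained near $\tau=r$ and the constraint $\lambda<Q-\alpha p$ guarantees the correct power matches $r^{\lambda-Q}$. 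For condition (\ref{74}) I would substitute directly to get
\[
\int\limits_{r}^{\infty}\Bigl(1+\ln\tfrac{\tau}{r}\Bigr)\tau^{\alpha}\tau^{(\lambda-Q)/p}\frac{d\tau}{\tau}\le C\,r^{-\alpha p/(q-p)},
\]
and then confirm that the exponent $\alpha+\tfrac{\lambda-Q}{p}-1$ is strictly negative (this is exactly where $1<p<\tfrac{Q}{\alpha}$ and $0<\lambda<Q-\alpha p$ enter), so the integral converges and evaluates, up to the logarithmic correction, to a constant times $r^{\alpha+(\lambda-Q)/p}$; the relation $\tfrac{1}{p}-\tfrac{1}{q}=\tfrac{\alpha}{Q-\lambda}$ is what forces this to equal $r^{-\alpha p/(q-p)}$.

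The main obstacle I anticipate is bookkeeping with the logarithmic factors $(1+\ln\tfrac{\tau}{r})$ and $(1+\ln\tfrac{\tau}{r})^{p}$ rather than anything conceptual: one must confirm that the relevant power of $\tau$ is genuinely negative (strict inequality) so that $\int_{r}^{\infty}(1+\ln\tfrac{\tau}{r})\tau^{c-1}\,d\tau\approx r^{c}$ with an implied constant depending only on $c$ and $p$, and similarly for the supremal condition. Once both (\ref{67}) and (\ref{74}) are checked, Theorem~\ref{teo100} applies with this choice of $\varphi$ and, after translating back through $M_{p,\varphi^{1/p}}=L_{p,\lambda}$ and $M_{q,\varphi^{1/q}}=L_{q,\lambda}$, yields the boundedness of $M_{b,\alpha}$ and $[b,\overline{T}_{\alpha}]$ from $L_{p,\lambda}(H_{n})$ to $L_{q,\lambda}(H_{n})$, completing the proof.
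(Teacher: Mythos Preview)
Your proposal is correct and follows exactly the route the paper indicates: the Remark immediately preceding the Corollary states that the result is obtained from Theorem~\ref{teo100} by taking $\varphi(g,r)=r^{\lambda-Q}$, and you carry out precisely this specialization, supplying the verification of conditions (\ref{67}) and (\ref{74}) that the paper leaves implicit. One small bookkeeping remark: in checking (\ref{67}) the relevant constraint is only $\lambda<Q$ (so that $\tau^{\lambda-Q}$ has negative exponent and the supremum is attained near $\tau=r$); it is in (\ref{74}) that the sharper hypothesis $\lambda<Q-\alpha p$ is genuinely needed to make the exponent $\alpha+(\lambda-Q)/p$ strictly negative, exactly as you observe.
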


\end{document}